\documentclass[11pt]{article}
\usepackage[a4paper,margin=1in]{geometry}
\usepackage{amsmath,amssymb,amsthm,mathtools}
\usepackage{graphicx}
\usepackage{booktabs}
\usepackage{microtype}
\usepackage{enumitem}
\usepackage{hyperref}
\usepackage{caption}
\usepackage{subcaption}
\usepackage{xcolor}
\usepackage{bm}
\hypersetup{colorlinks=true,citecolor=blue,linkcolor=blue,urlcolor=blue}
\newtheorem{theorem}{Theorem}[section]
\newtheorem{proposition}[theorem]{Proposition}
\newtheorem{corollary}[theorem]{Corollary}

\theoremstyle{remark}
\newtheorem{remark}[theorem]{Remark}
\newtheorem{definition}[theorem]{Definition}
\newcommand{\dt}{\Delta t}
\newcommand{\dx}{\Delta x}

\newcommand{\norm}[1]{\left\lVert #1\right\rVert}

\title{PDE Realization and Structure-Aware Solvers for a Compact\\Two-Stage Fourth-Order IMEX Method}
\author{Zhixin Huo\\
School of Mathematics and Information Science, Henan Polytechnic University\\
Jiaozuo 454003, Henan, China\\
\texttt{zhixinhuo@hpu.edu.cn}}
\date{}

\begin{document}
\maketitle

\begin{abstract}
This paper develops a PDE realization and solver framework for a compact two-stage fourth-order two-derivative implicit--explicit time discretization for stiff split problems. The key difficulty is a consistency--cost coupling: full-field temporal differentiation is required to preserve the mixed explicit--implicit interactions of the time integrator, but the same interactions widen the implicit stage operators and can make each solve substantially more expensive. We show that a second-order ADER/Cauchy--Kowalevski local evolution is sufficient for the fourth-order outer composition, establish smooth reconstruction consistency and a fully discrete error balance, and derive the leading Lie-bracket defect produced by self differentiation of the split fields. An inexact-stage analysis gives asymmetric midpoint and endpoint residual tolerances that preserve fourth-order accuracy. For the widened stages, the complete mixed action is retained matrix-free while only dominant stiff physics is approximated in the inverse. This yields quadratic and shifted preconditioners, Fourier and multilevel realizations, a semilinear reaction--diffusion reduction, and source-local elimination for relaxation systems. Exact quadratic cancellation, a diffusion-dominated Fourier estimate, and an $\varepsilon/\Delta x$ bound for Jin--Xin relaxation explain the main solver mechanisms. Numerical ablations verify the consistency and tolerance results, while a two-dimensional Brusselator study on two grids shows favorable error-versus-wall-time behavior over a useful accuracy range. A classical stiff-front benchmark also identifies the separate spatial shock--source limitation. The results support a regime-dependent efficiency claim rather than a universal speedup.
\end{abstract}

\noindent\textbf{Keywords:} IMEX methods; multiderivative time integration; ADER/CK; matrix-free solvers; physics-based preconditioning; stiff PDEs.

\noindent\textbf{MSC 2020:} 65L04; 65L06; 65M12; 65M20; 65F08.

\section{Introduction}
Compact high-order time integration is attractive for stiff PDEs because the cost of a practical implicit method is determined not only by the formal stage count, but by the algebraic systems created at those stages.  Classical fourth-order IMEX Runge--Kutta methods provide a mature framework for additive splittings~\cite{ascher1997,kennedy2003,pareschi2005,sandu2015}, while multiderivative and Lax--Wendroff-type methods use temporal derivative information to obtain high order with fewer stages~\cite{lax1960,qiu2003,seal2014,christlieb2016}.  Two-stage fourth-order constructions based on local evolution information are by now well established in GRP, gas-kinetic, and related flow solvers~\cite{benartzi2006,li2016,pan2016,du2018,yuan2018}.  ADER methods systematize the same CK philosophy and have been extended to stiff balance laws, WENO discretizations, and locally implicit advection--diffusion--reaction formulations~\cite{toro2002,dumbser2008fv,dumbser2008framework,balsara2009,montecinos2014,toro2015}.

The present paper is a companion PDE-realization study of a compact two-stage fourth-order two-derivative IMEX time discretization developed in a separate manuscript by the present author that is currently under review. The related manuscript establishes the underlying time-stepping formula and its time-level accuracy and stability properties; the present study begins after spatial discretization.  For
\begin{equation}
U_t=F(U)+G(U)=H(U),
\label{eq:split}
\end{equation}
the companion paper establishes the time-stepping formula, fourth-order accuracy when the temporal derivatives are evaluated along the full field,
\begin{equation}
\dot F=F_U(F+G),\qquad \dot G=G_U(F+G),
\label{eq:fullfield}
\end{equation}
and favorable stiff-mode decay in the appropriate limiting regimes.  The question addressed here is different: how should \eqref{eq:fullfield} be realized after spatial discretization, and how can the resulting implicit systems be solved without losing the computational advantage of the compact stage count?

This distinction is important because the same mixed terms that are mathematically required in \eqref{eq:fullfield} also alter the linear algebra.  For linear semi-discrete operators \(F_h=A_hU\) and \(G_h=B_hU\), an implicit stage contains
\[
B_h(A_h+B_h),
\]
which is wider and more expensive than the original stiff operator \(B_h\).  If that product is assembled and factorized naively, ``two implicit solves per step'' need not mean a cheap method.  Our organizing principle is therefore
\begin{center}
\emph{retain the full mixed discretization exactly, but approximate only the dominant stiff physics in the inverse.}
\end{center}
This separates mathematical consistency from linear-algebra approximation.

To keep the boundary between the two companion studies explicit, we treat four ingredients as inherited from the companion time-discretization manuscript: the two-stage fourth-order formula itself, full-field mixed compatibility at the time-discretization level, the fourth-order temporal consistency of the outer method, and the pure-implicit stability function with its quadratic stiff decay.  The present paper begins at the PDE/semi-discrete level.  Its new content is the ADER/CK realization and reconstruction analysis; the explicit Lie-bracket defect of an incorrect self-differentiated realization; the fully discrete and inexact-stage accuracy budgets; the widened mixed-stage operator and its matrix-free, quadratic, FFT, multilevel, semilinear, and source-local solver realizations; the \(\varepsilon/\dx\) relaxation estimate; and PDE-level benchmark and wall-clock evidence.  This division of scope is maintained throughout the paper so that results already established in the companion time-discretization manuscript are used only as inputs or motivation.

The central scientific point is a \emph{consistency--cost coupling}: the cross interaction required to realize the inherited fourth-order method correctly at PDE level is the same interaction that widens the implicit stage and can erase the nominal advantage of having only two stages.  The contributions are therefore threefold.  First, we establish the PDE realization side of this coupling through mixed-compatible ADER2/CK differentiation, smooth reconstruction consistency, the leading commutator defect, and fully discrete/inexact-stage accuracy budgets.  Second, we resolve the resulting algebraic cost through the principle of an exact mixed residual with a structure-aware approximate inverse, supported by exact quadratic cancellation, a diffusion-dominated Fourier bound, semilinear contraction, and the Jin--Xin \(\varepsilon/\dx\) estimate.  Third, we test these mechanisms on standard stiff PDE benchmarks, direct wall-time/error sweeps, and reproducible solver-mechanism experiments, explicitly distinguishing accuracy per expensive solve from wall-clock time-to-accuracy.

The remainder of the paper is organized as follows. Section~2 recalls only the inherited compact time-discretization formulas needed to define the PDE realization target and explains why an order-two local evolution is sufficient. Section~3 constructs the PDE-level CK derivatives and proves their smooth reconstruction consistency. Section~4 derives the leading commutator defect produced by self differentiation. Section~5 develops the fully discrete error balance and the residual tolerances required for inexact stage solves. Section~6 identifies the widened mixed-stage operator that links consistency to computational cost. Section~7 develops the structure-aware solver hierarchy and its algebraic, Fourier, contraction, and relaxation estimates. Section~8 presents the mechanism tests, standard PDE benchmarks, wall-time/error comparisons, and reproducible checks of the solver estimates. Section~9 discusses the scope and limitations of the results, and Section~10 summarizes the conclusions.

\section{Inherited compact time discretization and PDE realization target}
Let \(h=\dt\).  The companion method has midpoint stage
\begin{equation}
U^*=U^n+\frac h2F(U^n)+\frac{h^2}{8}\dot F(U^n)
+\frac h2G(U^*)-\frac{h^2}{8}\dot G(U^*),
\label{eq:mid}
\end{equation}
and endpoint stage
\begin{equation}
\begin{aligned}
U^{n+1}={}&U^n+hF(U^n)+\frac{h^2}{6}\left[\dot F(U^n)+2\dot F(U^*)\right]\\
&+hG(U^{n+1})-\frac{h^2}{6}\left[\dot G(U^{n+1})+2\dot G(U^*)\right].
\end{aligned}
\label{eq:end}
\end{equation}
The time-discretization analysis, mixed compatibility of \eqref{eq:fullfield}, and the stability function are developed in the companion time-discretization manuscript. Here we ask only how the quantities appearing in \eqref{eq:mid}--\eqref{eq:end} can be supplied and solved at PDE level.

\subsection{Order-two local evolution is enough}
\begin{definition}[ADER2 local evolution]
For a smooth physical quantity \(\Phi(t)\), define
\[
\mathcal E_2[\Phi](\tau)=\Phi(t_n)+\tau\dot\Phi(t_n),\qquad 0\le \tau\le h.
\]
Then \(\Phi(t_n+\tau)-\mathcal E_2[\Phi](\tau)=O(\tau^2)\).
\end{definition}
The adjective ``second order'' refers to this local linear-in-time trajectory, not to a claim that \(\dot\Phi\) itself has a temporal discretization error of order \(h^2\).

For the split field \eqref{eq:split}, the mixed-compatible local models are
\begin{equation}
F(t_n+\tau)=F_n+\tau F_U(F+G)_n+O(\tau^2),\qquad
G(t_n+\tau)=G_n+\tau G_U(F+G)_n+O(\tau^2).
\label{eq:ader2_models}
\end{equation}
The inherited Hermite update needs the midpoint derivative values only with a factor \(h^2\), so an \(O(h^3)\) midpoint state is sufficient.

\begin{proposition}[ADER2 sufficiency for the inherited outer composition]
\label{prop:ader2_suff}
Assume the regularity and local-solvability hypotheses under which the underlying time discretization is fourth-order accurate, and assume the PDE local-evolution module supplies the instantaneous full-field derivatives \eqref{eq:fullfield} at the numerical stages.  If the midpoint stage is solved with \(U^*-U(t_n+h/2)=O(h^3)\), then replacing the exact midpoint by \(U^*\) changes the endpoint Hermite defect by only \(O(h^5)\).  Thus an order-two ADER/CK local trajectory is sufficient; no third- or fourth-order local predictor is required solely to preserve the fourth-order outer method.
\end{proposition}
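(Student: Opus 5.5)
The plan is a perturbation argument on the endpoint formula \eqref{eq:end}, in which the midpoint enters only through the term \(\frac{h^2}{6}\cdot 2[\dot F(U^*)-\dot G(U^*)]\). Let \(U_m:=U(t_n+h/2)\) denote the exact midpoint. I would define the endpoint Hermite defect with the exact midpoint,
\[
\tau_{n+1}=U(t_{n+1})-U(t_n)-hF(U(t_n))-\tfrac{h^2}{6}\bigl[\dot F(U(t_n))+2\dot F(U_m)\bigr]-hG(U(t_{n+1}))+\tfrac{h^2}{6}\bigl[\dot G(U(t_{n+1}))+2\dot G(U_m)\bigr].
\]
By the inherited fourth-order consistency of the outer composition this is \(O(h^5)\); I use it only as an input. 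The defect \(\tau^*_{n+1}\), obtained by replacing \(U_m\) with the numerical \(U^*\), differs from \(\tau_{n+1}\) by exactly
\[
\tau^*_{n+1}-\tau_{n+1}=-\tfrac{h^2}{3}\bigl[\dot F(U^*)-\dot F(U_m)\bigr]+\tfrac{h^2}{3}\bigl[\dot G(U^*)-\dot G(U_m)\bigr].
\]

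The key step is a Lipschitz bound on the full-field derivative maps \(\Phi\mapsto \dot F(\Phi)=F_U(\Phi)(F+G)(\Phi)\) and \(\dot G(\Phi)=G_U(\Phi)(F+G)(\Phi)\) in a neighbourhood of the exact trajectory. This follows from the assumed regularity: \(F,G\in C^2\) with bounded derivatives there. A mean-value estimate then gives \(\|\dot F(U^*)-\dot F(U_m)\|\le L\|U^*-U_m\|=O(h^3)\), and likewise for \(\dot G\). Multiplying by \(h^2/3\) yields \(O(h^5)\), so the perturbed defect remains \(O(h^5)\) and local order five (hence global order four) is preserved.

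To conclude the sufficiency claim, I would observe that an order-two local trajectory \(\mathcal E_2\) supplies precisely \(\Phi_n\) and \(\dot\Phi_n\), which are the instantaneous values \eqref{eq:fullfield} required at the stages. Any predictor that only needs the midpoint to \(O(h^3)\), as the midpoint stage \eqref{eq:mid} delivers under the hypothesis, contributes through a factor \(h^2\) and thus at \(O(h^5)\). Higher local predictors therefore do not improve the order.

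The main obstacle is a hidden stiffness issue. In stiff PDEs the Lipschitz constant \(L\) of \(\dot G\) may scale like \(\|B_h(A_h+B_h)\|\), which is large (e.g.\ \(\dx^{-2}\) or \(\varepsilon^{-1}\)). In that case \(h^2L\cdot O(h^3)\) is not uniformly \(O(h^5)\). I would handle this by stating the proposition in the non-stiff consistency sense under the inherited regularity hypotheses, with constants depending on bounds of \(F,G\) and their derivatives. I would also note that uniformity in stiffness is a separate question, since the implicit endpoint solve, with its local-solvability and stability assumptions, absorbs the perturbation through bounded \((I-hG_U+\tfrac{h^2}{6}\dot G_U)^{-1}\) only when that inverse is stiffness-uniformly bounded. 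Finally, I would remark that the accuracy \(U^*-U_m=O(h^3)\) itself is consistent with \eqref{eq:mid}, whose local truncation error is \(O(h^3)\).
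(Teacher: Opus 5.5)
Your proposal is correct and follows essentially the same argument as the paper. Both isolate the midpoint-dependent terms $\tfrac{h^2}{3}\bigl[\dot F(U^*)-\dot G(U^*)\bigr]$ in the endpoint Hermite update, bound the changes in $\dot F$ and $\dot G$ by local Lipschitz continuity times $\|U^*-U_m\|=O(h^3)$, and use the $h^2$ prefactor to obtain $O(h^5)$. Your stiffness caveat matches the paper's scope, since the paper's constant $L$ is independent of $h$ only for a fixed semi-discretization and is not claimed to be uniform in stiffness.
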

\begin{proof}
Let $U_m=U(t_n+h/2)$ denote the exact midpoint. The one-sided Hermite formulas used by the inherited outer method can be written, for a smooth scalar or vector quantity $\phi$, as
\[
\int_{t_n}^{t_n+h}\phi(t)\,dt=h\phi(t_n)+\frac{h^2}{6}\bigl(\dot\phi(t_n)+2\dot\phi(U_m)\bigr)+O(h^5),
\]
and, in right-endpoint form,
\[
\int_{t_n}^{t_n+h}\phi(t)\,dt=h\phi(t_n+h)-\frac{h^2}{6}\bigl(\dot\phi(t_n+h)+2\dot\phi(U_m)\bigr)+O(h^5).
\]
Only the midpoint derivative values are affected when $U_m$ is replaced by the numerical stage $U^*$. By hypothesis $U^*-U_m=O(h^3)$. Because the full-field derivative maps are locally Lipschitz on the stage neighborhood, there is a constant $L$, independent of sufficiently small $h$, such that
\[
\|\dot F(U^*)-\dot F(U_m)\|+\|\dot G(U^*)-\dot G(U_m)\|\le L\|U^*-U_m\|=O(h^3).
\]
Each midpoint derivative enters the endpoint update with a factor of order $h^2$. Hence the total change in the endpoint residual caused by using $U^*$ rather than $U_m$ is $O(h^5)$. The endpoint value and endpoint derivative terms are unchanged by this replacement. Therefore the local defect of the inherited fourth-order composition remains of order $h^5$, proving that the order-two local trajectory is sufficient for the outer fourth-order method.
\end{proof}

\section{PDE-level CK construction and reconstruction consistency}
Consider the convection--diffusion--reaction equation
\begin{equation}
u_t+\nabla\cdot f(u)=\nabla\cdot(D\nabla u)+S(u).
\label{eq:cdr}
\end{equation}
Set
\[
F(u)=-\nabla\cdot f(u),\qquad G(u)=\nabla\cdot(D\nabla u)+S(u),\qquad H=F+G.
\]
A first CK differentiation along the complete field gives
\begin{equation}
\dot F=-\nabla\cdot\big(f'(u)H\big),\qquad
\dot G=\nabla\cdot(D\nabla H)+S'(u)H.
\label{eq:ck}
\end{equation}
Only the first physical time derivative is used, so \eqref{eq:ck} remains an ADER2 local model.

After spatial discretization,
\begin{equation}
\frac{dU_h}{dt}=F_h(U_h)+G_h(U_h)=H_h(U_h),
\end{equation}
and the ideal semi-discrete directional derivatives are
\begin{equation}
D_{F,h}=J_{F_h}H_h,\qquad D_{G,h}=J_{G_h}H_h.
\label{eq:jvp}
\end{equation}
For linear operators these are exact matrix products.  For nonlinear WENO calculations we use the PDE-level CK form \eqref{eq:ck}; a centered matrix-free JVP is useful as an implementation cross-check.

\begin{proposition}[Smooth CK reconstruction consistency]
\label{prop:ck_consistency}
Assume \(u\) is smooth and
\[
H_h(\Pi_hu)=\Pi_hH(u)+O(\dx^p)
\]
with a uniformly smooth asymptotic error expansion.  Let \(D_h^{(m)}\) be stable discrete first- and second-derivative operators satisfying
\[
D_h^{(m)}\Pi_h\psi=\Pi_h(\partial_x^m\psi)+O(\dx^{r_m}),\qquad m=1,2.
\]
For
\[
F=-\partial_x f(u),\qquad G=\nu u_{xx}+S(u),
\]
define
\[
\widehat{\dot F}_h=-D_h^{(1)}\big(f'(\Pi_hu)H_h\big),\qquad
\widehat{\dot G}_h=\nu D_h^{(2)}H_h+S'(\Pi_hu)H_h.
\]
Then
\[
\widehat{\dot F}_h=\Pi_h\dot F+O(\dx^{q_F}),\quad q_F=\min\{p,r_1\},
\]
and
\[
\widehat{\dot G}_h=\Pi_h\dot G+O(\dx^{q_G}),\quad q_G=\min\{p,r_2\}.
\]
The multidimensional result follows componentwise.
\end{proposition}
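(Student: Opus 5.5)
The plan is to split each error into two parts by adding and subtracting the discrete operator applied to the exact projected field. In this setting \(\dot F=-\partial_x(f'(u)H(u))\) and \(\dot G=\nu\partial_x^2H(u)+S'(u)H(u)\), with \(H(u)=-\partial_xf(u)+\nu u_{xx}+S(u)\), which is smooth because \(u\) is. Write \(H_h\) for \(H_h(\Pi_hu)\) and set
\[
e_h:=H_h-\Pi_hH(u)=O(\dx^p).
\]
By the smooth-expansion hypothesis, \(e_h=\dx^p\,\Pi_h\eta+o(\dx^p)\) for a smooth profile \(\eta\), and this remains true term by term.

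For the reaction part of \(\widehat{\dot G}_h\), I will use the fact that the projection is pointwise (nodal), so \(\Pi_h\) commutes with multiplication by smooth functions:
\[
S'(\Pi_hu)\,\Pi_hH=\Pi_h\bigl(S'(u)H\bigr).
\]
This gives
\[
S'(\Pi_hu)H_h-\Pi_h(S'(u)H)=S'(\Pi_hu)\,e_h=O(\dx^p),
\]
since \(S'(\Pi_hu)\) is uniformly bounded. For a cell-average projection I would instead invoke the standard \(O(\dx^2)\)-type product commutator and absorb it. I will take the projection to be nodal, or include the commutator in \(p\), as the statement implicitly does.

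The diffusion part is handled by
\[
\nu D_h^{(2)}H_h-\nu\Pi_h\partial_x^2H=\nu D_h^{(2)}e_h+\nu\bigl(D_h^{(2)}\Pi_hH-\Pi_h\partial_x^2H\bigr).
\]
The second bracket is \(O(\dx^{r_2})\) by the derivative-consistency hypothesis applied to the smooth \(\psi=H\). The convection term is treated the same way. Here
\[
f'(\Pi_hu)H_h=\Pi_h\bigl(f'(u)H\bigr)+f'(\Pi_hu)\,e_h,
\]
so that
\[
\widehat{\dot F}_h-\Pi_h\dot F=-D_h^{(1)}\bigl(f'(\Pi_hu)e_h\bigr)-\bigl(D_h^{(1)}\Pi_h\psi-\Pi_h\partial_x\psi\bigr),\qquad \psi=f'(u)H,
\]
and the last term is \(O(\dx^{r_1})\).

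The main obstacle is the term where a derivative operator acts on the consistency error: \(D_h^{(m)}e_h\), and \(D_h^{(1)}(f'(\Pi_hu)e_h)\). Naively, \(\|D_h^{(m)}\|\sim\dx^{-m}\) would cost powers of \(\dx\). This is exactly why the smooth asymptotic expansion is assumed. Writing \(e_h=\dx^p\Pi_h\eta+\dots\), with each term the projection of a smooth function, I apply the derivative-consistency hypothesis to each term:
\[
D_h^{(m)}\Pi_h\eta=\Pi_h\partial_x^m\eta+O(\dx^{r_m})=O(1).
\]
Hence
\[
D_h^{(m)}e_h=O(\dx^p),
\]
provided the expansion is carried far enough that the remainder is \(o(\dx^{p+m})\), so that the inverse-inequality loss on the remainder is harmless. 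Stability of \(D_h^{(m)}\) is used only to control that remainder. For the convective term, \(f'(\Pi_hu)\,\Pi_h\eta=\Pi_h(f'(u)\eta)\) is again a projected smooth function, so the same argument applies.

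Collecting the two contributions gives \(O(\dx^p)+O(\dx^{r_1})\) for \(\widehat{\dot F}_h\) and \(O(\dx^p)+O(\dx^{r_2})\) for \(\widehat{\dot G}_h\), which are the stated rates \(q_F=\min\{p,r_1\}\) and \(q_G=\min\{p,r_2\}\). In several dimensions, the same splitting is applied to each directional operator \(D_{h,x_i}^{(m)}\) and the results are summed, which yields the componentwise claim.
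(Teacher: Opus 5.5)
Your proof is correct and follows the paper's decomposition: you add and subtract the discrete operator applied to the projected exact field, bound the consistency commutator by $O(\dx^{r_m})$, and bound the derivative acting on the $O(\dx^p)$ error through the smooth asymptotic expansion. You are more explicit than the paper on two points it leaves loose: you show why $D_h^{(m)}$ applied to the error loses no powers of $\dx$ (carry the expansion to an $O(\dx^{p+m})$ remainder), and you note that the paper's sampling remainders $\mathcal R_F,\mathcal R_G$ vanish for a nodal projection but would limit the order for cell averages.
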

\begin{proof}
Write
\[
H_h(\Pi_hu)=\Pi_hH+\dx^p e_h,
\]
where $e_h$ and the discrete derivatives required below are uniformly bounded. For the convective derivative, add and subtract the exact sampled product to obtain
\[
\widehat{\dot F}_h-\Pi_h\dot F=-\bigl(D_h^{(1)}\Pi_h-\Pi_h\partial_x\bigr)\bigl(f'(u)H\bigr)-D_h^{(1)}\Bigl(f'(\Pi_hu)\bigl(H_h-\Pi_hH\bigr)\Bigr)+\mathcal R_F.
\]
The first term is $O(\dx^{r_1})$ by consistency of $D_h^{(1)}$. The second is $O(\dx^p)$ because the smooth asymptotic expansion of $H_h-\Pi_hH$ is preserved under multiplication by the smooth factor $f'(\Pi_hu)$ and by the assumed stable discrete derivative. The sampling remainder $\mathcal R_F$ has the same or higher order under the stated projection assumptions. Thus the convective derivative error is $O(\dx^{\min(p,r_1)})$.

For the diffusion/source derivative,
\[
\begin{aligned}
\widehat{\dot G}_h-\Pi_h\dot G={}&\nu\bigl(D_h^{(2)}\Pi_h-\Pi_h\partial_{xx}\bigr)H+\nu D_h^{(2)}\bigl(H_h-\Pi_hH\bigr)\\
&+S'(\Pi_hu)\bigl(H_h-\Pi_hH\bigr)+\mathcal R_G.
\end{aligned}
\]
The displayed terms are respectively $O(\dx^{r_2})$, $O(\dx^p)$, $O(\dx^p)$, and a sampling remainder of no lower order, so $q_G=\min\{p,r_2\}$. In several space dimensions the CK formulas are sums of directional terms. Applying the same estimate componentwise and summing proves the multidimensional statement.
\end{proof}

\section{What fails if the split components are differentiated separately}
The companion time-discretization study observed numerically that omitting the mixed derivatives destroys high order for a noncommuting split. Here we identify the leading defect explicitly at the PDE/semi-discrete level.

\begin{proposition}[Leading defect of self differentiation]
\label{prop:commutator}
Replace the full derivatives by the incomplete self derivatives
\[
\widetilde{\dot F}=F_UF,\qquad \widetilde{\dot G}=G_UG.
\]
Then one numerical step of \eqref{eq:mid}--\eqref{eq:end} satisfies
\begin{equation}
\widetilde U^{n+1}=U^n+hH+\frac{h^2}{2}H_UH
+\frac{h^2}{2}(G_UF-F_UG)+O(h^3).
\label{eq:comm_defect}
\end{equation}
For \(F(U)=AU\), \(G(U)=BU\),
\begin{equation}
\widetilde U^{n+1}-U(t_n+h)
=-\frac{h^2}{2}[A,B]U^n+O(h^3),\qquad [A,B]=AB-BA.
\label{eq:linear_comm}
\end{equation}
Thus a nonzero commutator generically produces first-order global accuracy under stability.
\end{proposition}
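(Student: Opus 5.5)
The plan is a direct Taylor expansion of the endpoint stage \eqref{eq:end} about \(U^n\), keeping terms only through \(O(h^2)\). The key observation is that every \(\dot F\), \(\dot G\) contribution already carries a factor \(h^2\), so it may be evaluated at \(U^n\) at the cost of \(O(h^3)\). The only place where the stages interact at order \(h^2\) is the implicit term \(hG(\widetilde U^{n+1})\). I work in the nonstiff asymptotic regime: \(F,G\) are smooth with \(h\)-independent bounds on a neighborhood of \(U^n\), and the stage equations are locally solvable by the implicit function theorem for small \(h\). This gives \(U^*-U^n=O(h)\) and \(\widetilde U^{n+1}-U^n=O(h)\). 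All functions below are evaluated at \(U^n\) unless indicated.

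\textbf{Step 1 (a priori bounds).} From \eqref{eq:mid} with the self derivatives, \(U^*=U^n+O(h)\). From \eqref{eq:end}, \(\widetilde U^{n+1}=U^n+hH+O(h^2)\), since the \(h^2\) terms are bounded and \(G(\widetilde U^{n+1})=G+O(h)\). This first-order information is all that the midpoint stage contributes. Local Lipschitz continuity of \(U\mapsto F_UF\) and \(U\mapsto G_UG\) then gives \(\widetilde{\dot F}(U^*)=F_UF+O(h)\), \(\widetilde{\dot G}(U^*)=G_UG+O(h)\) and \(\widetilde{\dot G}(\widetilde U^{n+1})=G_UG+O(h)\).

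\textbf{Step 2 (expansion of the endpoint).} Substituting these into \eqref{eq:end}, the bracketed \(h^2/6\) terms become \(\tfrac{h^2}{2}F_UF-\tfrac{h^2}{2}G_UG+O(h^3)\). For the implicit term, Taylor expansion and Step 1 give
\[
hG(\widetilde U^{n+1})=hG+hG_U(\widetilde U^{n+1}-U^n)+O(h^3)=hG+h^2G_UH+O(h^3).
\]
Collecting terms,
\[
\widetilde U^{n+1}=U^n+hH+h^2\Bigl(\tfrac12F_UF+\tfrac12G_UG+G_UF\Bigr)+O(h^3).
\]
Writing \(\tfrac12 H_UH=\tfrac12(F_UF+F_UG+G_UF+G_UG)\) and subtracting it leaves exactly \(\tfrac12(G_UF-F_UG)\). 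This proves \eqref{eq:comm_defect}.

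\textbf{Step 3 (linear case and global order).} For \(F=AU\), \(G=BU\), the exact flow gives \(U(t_n+h)=U^n+h(A+B)U^n+\tfrac{h^2}{2}(A+B)^2U^n+O(h^3)\), and \(H_UH=(A+B)^2U^n\). The difference with \eqref{eq:comm_defect} is therefore \(\tfrac{h^2}{2}(BA-AB)U^n=-\tfrac{h^2}{2}[A,B]U^n\), which is \eqref{eq:linear_comm}. For the global statement I use the standard Lady Windermere's fan argument. The local defect is \(O(h^2)\) and generically nonzero when \([A,B]U^n\neq0\). If the one-step map is stable, these defects are propagated with bounded amplification over \(N=T/h\) steps and accumulate to \(O(h)\). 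Hence the method is generically first-order accurate.

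\textbf{Main obstacle.} The delicate point is uniformity of the \(O(h^3)\) remainders. In genuinely stiff regimes, \(G_U\) may scale like \(\dx^{-2}\), and the constants in Step 2 are then not \(h\)-independent. I would state the result explicitly under the nonstiff scaling, or for fixed \(\dx\), which matches how \eqref{eq:comm_defect} is phrased. In the global claim, stability is an assumption, not something proved here. The word ``generically'' covers the possibility that \([A,B]U\) vanishes along the trajectory.
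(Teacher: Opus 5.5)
Your proposal is correct and follows essentially the same route as the paper's proof. Both Taylor-expand the endpoint stage about \(U^n\), freezing the \(h^2\)-weighted self-derivative terms at \(U^n\) and extracting the only mixed contribution \(h^2G_UH\) from the implicit term \(hG(\widetilde U^{n+1})\), then compare with \(\tfrac{h^2}{2}H_UH\) and specialize to \(F=AU\), \(G=BU\). The paper first expands the midpoint stage through \(O(h^2)\) and only afterwards notes that its first-order part suffices; your Step 1 skips that expansion directly, and your caveat about non-uniform \(O(h^3)\) constants in stiff regimes fits the paper's fixed-semi-discretization setting.
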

\begin{proof}
All vector fields and Jacobians in the expansion are evaluated at $U^n$ unless another argument is displayed. Let $\delta^*=U^*-U^n$. The self-differentiated midpoint equation gives
\[
\delta^*=\frac h2F+\frac{h^2}{8}F_UF+\frac h2\bigl(G+G_U\delta^*\bigr)-\frac{h^2}{8}G_UG+O(h^3).
\]
At first order, $\delta^*=\tfrac h2H+O(h^2)$. Substitution back into the $G_U\delta^*$ term gives
\[
U^*=U^n+\frac h2H+\frac{h^2}{8}F_UF+\frac{h^2}{4}G_UF+\frac{h^2}{8}G_UG+O(h^3).
\]
Only the first-order part of this stage expansion is needed in the endpoint derivative terms because those terms already carry a factor $h^2$.

Let $\delta=\widetilde U^{n+1}-U^n$. The endpoint equation implies $\delta=hH+O(h^2)$ and therefore
\[
G(U^n+\delta)=G+G_U\delta+O(h^2)=G+hG_UH+O(h^2).
\]
Moreover, $(F_UF)(U^*)=F_UF+O(h)$, $(G_UG)(U^*)=G_UG+O(h)$, and $(G_UG)(U^n+\delta)=G_UG+O(h)$. Inserting these expansions into the endpoint formula and retaining terms through order $h^2$ yields
\[
\widetilde U^{n+1}=U^n+hH+h^2\left(\frac12F_UF+G_UF+\frac12G_UG\right)+O(h^3).
\]
The exact Taylor expansion is
\[
U(t_n+h)=U^n+hH+\frac{h^2}{2}\bigl(F_UF+F_UG+G_UF+G_UG\bigr)+O(h^3).
\]
Subtracting gives
\[
\widetilde U^{n+1}-U(t_n+h)=\frac{h^2}{2}(G_UF-F_UG)+O(h^3),
\]
which proves \eqref{eq:comm_defect}. For linear fields, $F_U=A$ and $G_U=B$, hence $G_UF-F_UG=(BA-AB)U=-[A,B]U$, proving \eqref{eq:linear_comm}. Under one-step stability, a nonzero local defect of order $h^2$ accumulates to a generic first-order global error.
\end{proof}

\begin{remark}
The defect in \eqref{eq:comm_defect} is the Lie bracket of the two vector fields up to sign convention.  A commuting linear split eliminates the displayed leading term, but that special cancellation is not a general justification for self differentiation in nonlinear or higher-order settings.
\end{remark}

\section{Fully discrete and inexact-stage error budgets}
Let \(\Pi_hu\) denote grid projection and assume
\begin{equation}
F_h(\Pi_hu)=\Pi_hF(u)+O(\dx^p),\qquad
G_h(\Pi_hu)=\Pi_hG(u)+O(\dx^p),
\label{eq:spatial_fg}
\end{equation}
while the reconstructed first temporal derivatives satisfy
\begin{equation}
\widehat D_{F,h}(\Pi_hu)=\Pi_h\dot F(u)+O(\dx^q),\qquad
\widehat D_{G,h}(\Pi_hu)=\Pi_h\dot G(u)+O(\dx^q).
\label{eq:spatial_d}
\end{equation}

\begin{corollary}[Fully discrete error balance]
\label{cor:full_discrete}
Assume the inherited fourth-order temporal method is stable for the fixed semi-discretization, the implicit-stage Jacobians have uniformly bounded local inverses, and \eqref{eq:spatial_fg}--\eqref{eq:spatial_d} hold while the exact solution remains smooth.  Then, for \(t_n\le T\),
\begin{equation}
\norm{U_h^n-\Pi_hu(t_n)}\le C_T\big(h^4+\dx^p+h\dx^q\big).
\label{eq:full_error}
\end{equation}
If \(h=O(\dx)\), the convergence order is at least \(\min\{4,p,q+1\}\).  Hence \(p\ge4\) and \(q\ge3\) are sufficient for fourth-order fully discrete convergence.
\end{corollary}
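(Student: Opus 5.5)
The plan is a standard Lax-type argument: local truncation error of the fully discrete one-step map applied to the projected exact solution, plus stability to accumulate. Write the numerical step as \(U_h^{n+1}=\Phi_h(U_h^n)\), defined implicitly by \eqref{eq:mid}--\eqref{eq:end} with \(F_h,G_h\) and the reconstructed derivatives \(\widehat D_{F,h},\widehat D_{G,h}\). The global error \(e^n=U_h^n-\Pi_hu(t_n)\) then satisfies
\[
e^{n+1}=\Phi_h(U_h^n)-\Phi_h(\Pi_hu(t_n))+\tau^n,\qquad \tau^n=\Phi_h(\Pi_hu(t_n))-\Pi_hu(t_{n+1}).
\]
The goal is to show \(\norm{\tau^n}\le C(h^5+h\dx^p+h^2\dx^q)\). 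The stability hypothesis, i.e.\ a Lipschitz bound \(\norm{\Phi_h(V)-\Phi_h(W)}\le(1+Ch)\norm{V-W}\) near the exact trajectory, should follow from the bounded local inverses of the stage Jacobians. A discrete Gronwall argument over \(n\le T/h\) steps then gives \eqref{eq:full_error}.

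To bound \(\tau^n\), I split it into a temporal and a spatial part.
\begin{itemize}
\item \textbf{Temporal part.} Replace every discrete quantity by its projected continuous counterpart: \(F_h\to\Pi_hF\), \(\widehat D_{F,h}\to\Pi_h\dot F\), and similarly for \(G\). The resulting map is the inherited time discretization applied to the exact field. Its local defect is \(O(h^5)\) by the companion fourth-order result, with the midpoint handled as in Proposition~\ref{prop:ader2_suff}.
\item \textbf{Spatial part.} The explicit terms \(hF_h(\Pi_hu(t_n))\) and \(hG_h\) contribute \(h\cdot O(\dx^p)\) by \eqref{eq:spatial_fg}. The derivative terms carry a factor \(h^2\) and contribute \(h^2\cdot O(\dx^q)\) by \eqref{eq:spatial_d}.
\end{itemize}

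The implicit arguments \(U^*\) and \(U^{n+1}\) need more care, because \eqref{eq:spatial_fg}--\eqref{eq:spatial_d} are stated only at projected smooth states. For these I apply the consistency at the exact midpoint and endpoint projections, \(\Pi_hu(t_n+h/2)\) and \(\Pi_hu(t_{n+1})\). The stage equations are then perturbations of the exact-stage equations, with residual \(O(h\dx^p+h^2\dx^q+h^5)\). The bounded inverse of \(I-\tfrac h2 J_{G_h}+\tfrac{h^2}{8}J_{\widehat D_{G,h}}\), and its endpoint analogue, maps this residual to a stage perturbation of the same size. The stage perturbation re-enters the update only through Lipschitz terms multiplied by \(h\) or \(h^2\), so it does not change the order. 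Summing the three contributions gives the claimed bound on \(\tau^n\).

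Accumulating \(N=T/h\) local errors then yields \(h^4+\dx^p+h\dx^q\). With \(h=O(\dx)\), the term \(h\dx^q\) is \(O(\dx^{q+1})\) and \(h^4\) is \(O(\dx^4)\), which gives the order \(\min\{4,p,q+1\}\) and the sufficiency of \(p\ge4\), \(q\ge3\).

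The main obstacle is the uniformity of the stability constant and the stage inverses with respect to \(\dx\). Stiff diffusion makes \(hJ_{G_h}\) large, and the perturbation argument for the implicit stages needs \(\norm{(I-\tfrac h2J_{G_h}+\cdots)^{-1}}\) bounded independently of \(\dx\). The Lipschitz factor of the \(h^2\)-derivative terms also involves \(h^2J_{\widehat D_{G,h}}\sim h^2\dx^{-4}\). I will treat both as part of the assumed uniform stability hypothesis rather than prove them: the stated assumptions already grant bounded local inverses and stability for the fixed semi-discretization. Within the proof I will only make explicit that the constants \(C_T\) inherit this uniformity.
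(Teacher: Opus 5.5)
Your proposal is correct and follows essentially the same route as the paper: a one-step defect $\tau_n=O(h^5+h\dx^p+h^2\dx^q)$ assembled from the temporal, $h$-weighted operator and $h^2$-weighted derivative contributions, stage residuals converted to stage states through the bounded inverse Jacobians, and then $(1+Ch)$ one-step stability with a discrete Gronwall sum. You go slightly beyond the paper, which simply assumes stability, in two places: you evaluate \eqref{eq:spatial_fg}--\eqref{eq:spatial_d} at the projected stage states, and you flag that $C_T$ must be uniform in $\dx$ for the $h=O(\dx)$ conclusion. One imprecision: at the midpoint the temporal residual is only $O(h^3)$, not $O(h^5)$, and it is the $h^2$ factor in the endpoint update that restores $O(h^5)$, as in Proposition~\ref{prop:ader2_suff}.
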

\begin{proof}
Let $\mathcal M_{h,\dx}$ denote one fully discrete step and insert the projected exact state $\Pi_hu(t_n)$ into both stages. There are three consistency contributions. First, the inherited temporal method has local defect $O(h^5)$ when exact semi-discrete operators and full-field derivatives are supplied. Second, every physical operator $F_h$ or $G_h$ is multiplied by a factor of order $h$, so \eqref{eq:spatial_fg} contributes $O(h\dx^p)$. Third, every reconstructed temporal derivative is multiplied by a factor of order $h^2$, so \eqref{eq:spatial_d} contributes $O(h^2\dx^q)$.

The stage equations are implicit, so residual estimates must be converted into state estimates. By the bounded inverse-Jacobian hypothesis, the local inverse function theorem applies uniformly on the stage neighborhood for sufficiently small $h$. Consequently, a residual perturbation of size $\eta$ changes the corresponding stage state by at most $C\eta$. Thus the projected exact solution produces the one-step defect
\[
\tau_n:=\mathcal M_{h,\dx}(\Pi_hu(t_n))-\Pi_hu(t_{n+1}),\qquad \|\tau_n\|\le C\bigl(h^5+h\dx^p+h^2\dx^q\bigr).
\]
Set $e_n=U_h^n-\Pi_hu(t_n)$. One-step stability gives
\[
\|\mathcal M_{h,\dx}(V)-\mathcal M_{h,\dx}(W)\|\le(1+Ch)\|V-W\|,
\]
so
\[
\|e_{n+1}\|\le(1+Ch)\|e_n\|+C\bigl(h^5+h\dx^p+h^2\dx^q\bigr).
\]
Iterating this inequality for $n\le T/h$ and applying the discrete Gronwall lemma yields
\[
\|e_n\|\le C_T\bigl(h^4+\dx^p+h\dx^q\bigr),
\]
which is \eqref{eq:full_error}. If $h=O(\dx)$, the three terms scale as $\dx^4$, $\dx^p$, and $\dx^{q+1}$, respectively, and the stated minimum order follows.
\end{proof}

\begin{remark}[Scope of the fully discrete estimate]
The constant in \eqref{eq:full_error} is not asserted to be uniform with respect to an independent stiffness parameter.  The corollary is a smooth-solution consistency/stability statement for the chosen semi-discretization; it is not an asymptotic-preserving, asymptotically accurate, or stiffness-uniform convergence theorem.
\end{remark}

In practice the two implicit stages are not solved exactly.  The compact method has an unusually useful asymmetry: the midpoint state appears in the endpoint formula only through derivative terms multiplied by \(h^2\), while the endpoint state itself must satisfy the final implicit equation accurately.

\begin{proposition}[Residual tolerances that preserve fourth order]
\label{prop:inexact}
Let \(U^*\) and \(U^{n+1}\) denote the exact roots of the two implicit stage equations for one step, and let \(\widehat U^*\), \(\widehat U^{n+1}\) be inexact stage solutions.  Let \(r_*\) be the \emph{true} residual of the midpoint equation evaluated at \(\widehat U^*\), and let \(r_1\) be the true residual of the endpoint equation evaluated at \(\widehat U^{n+1}\) with the already computed \(\widehat U^*\).  Assume local inverse stage Jacobians are uniformly bounded, the derivative maps are locally Lipschitz, and the inexact one-step map is stable on the solution neighborhood.  Then the one-step perturbation due to the inexact solves satisfies
\begin{equation}
\norm{\widehat U^{n+1}-U^{n+1}}
\le C\big(h^2\norm{r_*}+\norm{r_1}\big).
\label{eq:inexact_bound}
\end{equation}
Consequently, if
\begin{equation}
\norm{r_*}=O(h^3),\qquad \norm{r_1}=O(h^5),
\label{eq:tols}
\end{equation}
the inexact solves do not change the fourth-order global accuracy.  More generally, residual orders \(O(h^{p_*})\) and \(O(h^{p_1})\) contribute global errors \(O(h^{p_*+1})\) and \(O(h^{p_1-1})\), respectively.
\end{proposition}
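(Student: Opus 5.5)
The plan is a two-stage perturbation argument. We compare the inexact pair $(\widehat U^*,\widehat U^{n+1})$ with the exact roots $(U^*,U^{n+1})$ one stage at a time, using the bounded-inverse hypothesis to turn residuals into state errors. Then the global statement follows from a discrete Gronwall argument, as in Corollary~\ref{cor:full_discrete}.

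\emph{Midpoint stage.} Write the midpoint equation \eqref{eq:mid} as $\Phi_*(V)=0$, with
\[
\Phi_*(V)=V-U^n-\tfrac h2F(U^n)-\tfrac{h^2}{8}\dot F(U^n)-\tfrac h2G(V)+\tfrac{h^2}{8}\dot G(V).
\]
By hypothesis $\Phi_*(\widehat U^*)=r_*$ and $\Phi_*(U^*)=0$. The Jacobian $\partial\Phi_*=I-\tfrac h2G_U+\tfrac{h^2}{8}\dot G_U$ has a uniformly bounded local inverse. The mean value theorem, or the uniform local inverse function theorem as used in Corollary~\ref{cor:full_discrete}, then gives $\|\widehat U^*-U^*\|\le C\|r_*\|$.

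\emph{Endpoint stage, the key step.} Let $\Phi_1(V;W)$ denote the endpoint equation \eqref{eq:end} with midpoint argument $W$. The midpoint enters only through $\tfrac{h^2}{3}\bigl(\dot F(W)-\dot G(W)\bigr)$. I decompose
\[
\widehat U^{n+1}-U^{n+1}=\bigl(\widehat U^{n+1}-V^\sharp\bigr)+\bigl(V^\sharp-U^{n+1}\bigr),
\]
where $V^\sharp$ is the exact root of $\Phi_1(\cdot;\widehat U^*)=0$.
\begin{itemize}[nosep]
\item The first difference is bounded by $C\|r_1\|$. Here $r_1=\Phi_1(\widehat U^{n+1};\widehat U^*)$, and the inverse of $\partial_V\Phi_1=I-hG_U+\tfrac{h^2}{6}\dot G_U$ is uniformly bounded.
\item For the second difference, $V^\sharp$ and $U^{n+1}$ solve the same equation with midpoints differing by $\widehat U^*-U^*$. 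Local Lipschitz continuity of $\dot F,\dot G$ shows the residual of $U^{n+1}$ in $\Phi_1(\cdot;\widehat U^*)$ is at most $Ch^2\|\widehat U^*-U^*\|\le Ch^2\|r_*\|$. The same inverse bound converts this into a state bound.
\end{itemize}
Summing the two pieces gives \eqref{eq:inexact_bound}. The main point to check carefully is that $V^\sharp$ exists and stays in the neighborhood where the inverse bounds hold. I would handle this by a contraction or Newton–Kantorovich argument, using the smallness of $h^2\|r_*\|$ and of $\|r_1\|$.

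\emph{Global accumulation.} The bound \eqref{eq:inexact_bound} adds $C(h^2\|r_*\|+\|r_1\|)$ to the one-step defect $\tau_n$. Combining this with the stability estimate $\|\mathcal M(V)-\mathcal M(W)\|\le(1+Ch)\|V-W\|$ and summing over $T/h$ steps yields a global contribution $C_T h^{-1}\bigl(h^2\|r_*\|+\|r_1\|\bigr)$. With $\|r_*\|=O(h^{p_*})$ and $\|r_1\|=O(h^{p_1})$, this is $O(h^{p_*+1}+h^{p_1-1})$. The choices $p_*=3$ and $p_1=5$ in \eqref{eq:tols} therefore give $O(h^4)$, which matches the inherited fourth-order error and so preserves it.
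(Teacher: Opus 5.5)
Your proposal is correct and follows essentially the paper's argument: the bounded midpoint inverse gives $\|\widehat U^*-U^*\|\le C\|r_*\|$, and the $h^2$-weighted Lipschitz dependence of the endpoint equation on its midpoint argument turns this into an $O(h^2\|r_*\|)$ residual. Stable accumulation of the local perturbation $O(h^{p_*+2}+h^{p_1})$ over $O(h^{-1})$ steps then gives the global orders. The only difference is that the paper stays in residual space: it adds and subtracts $\Phi_1(U^{n+1};\widehat U^*)$ (in your notation), so that $\widehat U^{n+1}$ and $U^{n+1}$ are compared through the single map $\Phi_1(\cdot;\widehat U^*)$ with residual gap at most $\|r_1\|+Ch^2\|r_*\|$, and then applies the endpoint inverse-Jacobian bound once; this avoids the intermediate root $V^\sharp$ and hence the existence argument you flag.
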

\begin{proof}
Let $\mathcal R_*(V)$ denote the exact midpoint-stage residual and let $\mathcal R_1(W;V)$ denote the endpoint-stage residual, with the second argument recording the midpoint state used in the endpoint formula. Then $\mathcal R_*(U^*)=0$ and $\mathcal R_1(U^{n+1};U^*)=0$, while $r_*=\mathcal R_*(\widehat U^*)$ and $r_1=\mathcal R_1(\widehat U^{n+1};\widehat U^*)$.

By the bounded inverse midpoint Jacobian and a mean-value expansion of $\mathcal R_*$,
\[
\|\widehat U^*-U^*\|\le C\|r_*\|.
\]
The midpoint state enters the endpoint residual only through derivative terms multiplied by $h^2$. Therefore, using the local Lipschitz property of the derivative maps,
\[
\|\mathcal R_1(U^{n+1};\widehat U^*)-\mathcal R_1(U^{n+1};U^*)\|\le Ch^2\|\widehat U^*-U^*\|\le Ch^2\|r_*\|.
\]
Add and subtract $\mathcal R_1(U^{n+1};\widehat U^*)$ in the residual at $\widehat U^{n+1}$. The direct endpoint residual is $r_1$, while the additional residual induced by the inexact midpoint is bounded by the preceding estimate. Applying the bounded inverse endpoint Jacobian gives
\[
\|\widehat U^{n+1}-U^{n+1}\|\le C\bigl(h^2\|r_*\|+\|r_1\|\bigr),
\]
which is \eqref{eq:inexact_bound}.

If $\|r_*\|=O(h^{p_*})$ and $\|r_1\|=O(h^{p_1})$, the local perturbation is $O(h^{p_*+2})+O(h^{p_1})$. Stable accumulation over $O(h^{-1})$ steps yields global contributions $O(h^{p_*+1})$ and $O(h^{p_1-1})$. Taking $p_*=3$ and $p_1=5$ therefore preserves fourth-order convergence.
\end{proof}

\begin{remark}[Stopping tests]
The powers in \eqref{eq:tols} refer to the true stage-equation residual in a fixed norm (or to a consistently scaled dimensionless version of that residual).  A Krylov or nonlinear solver should therefore not infer the required order solely from an internal step-size or preconditioned-residual test unless that test is known to control the true residual.
\end{remark}

\section{From mixed compatibility to widened implicit stages}
The pure-implicit stability function and its quadratic stiff decay were derived in the companion time-discretization manuscript; we do not repeat that stability theorem here.  We only recall the asymptotic form
\begin{equation}
R_I(z)=-10z^{-2}+O(z^{-3}),\qquad |z|\to\infty,
\label{eq:ri_recall}
\end{equation}
because the same quadratic stiff polynomial appears in the PDE stage operator and motivates the solver design below.

For \(F_h=A_hU\), \(G_h=B_hU\), both implicit stages can be written
\begin{equation}
M_j=I-\alpha_jhB_h+\beta_jh^2B_h(A_h+B_h),
\qquad
(\alpha_1,\beta_1)=\left(\frac12,\frac18\right),\quad
(\alpha_2,\beta_2)=\left(1,\frac16\right).
\label{eq:mixed_stage}
\end{equation}
The exact action is applied matrix-free,
\begin{equation}
M_jv=v-\alpha_jhB_hv+\beta_jh^2B_h(A_hv+B_hv),
\label{eq:matrixfree}
\end{equation}
so neither \(B_hA_h\) nor \(B_h^2\) needs to be assembled.

\section{Structure-aware implicit solvers}
\subsection{Compact and quadratic stiff-physics preconditioners}
The lowest-cost preconditioner is
\[
P_j^{(1)}=I-\alpha_jhB_h.
\]
When the quadratic stiff term dominates, use
\begin{equation}
P_j^{(2)}=I-\alpha_jhB_h+\beta_jh^2B_h^2.
\label{eq:quadratic_precond}
\end{equation}
The polynomial can be realized through shifted factors
\[
P_j^{(2)}=(I-r_{j,+}hB_h)(I-r_{j,-}hB_h),\qquad
r_{j,\pm}=\frac{\alpha_j\pm\sqrt{\alpha_j^2-4\beta_j}}{2},
\]
so \(B_h^2\) need not be stored.

\begin{proposition}[Exact cancellation of the pure stiff quadratic term]
\label{prop:quadratic_cancel}
If \(P_j^{(2)}\) is nonsingular, then
\begin{equation}
(P_j^{(2)})^{-1}M_j
=I+\beta_jh^2(P_j^{(2)})^{-1}B_hA_h.
\label{eq:preconditioned}
\end{equation}
Thus the complete \(B_h^2\) contribution is removed algebraically, independently of whether \(A_h\) and \(B_h\) commute.
\end{proposition}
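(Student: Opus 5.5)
The identity in \eqref{eq:preconditioned} is purely algebraic, so I will prove it by splitting $M_j$ as $P_j^{(2)}$ plus a remainder and then multiplying by the inverse. The first step is to expand the mixed term in \eqref{eq:mixed_stage} by distributivity of matrix multiplication:
\[
B_h(A_h+B_h)=B_hA_h+B_h^2 .
\]
Note the ordering: the product is $B_hA_h$, not $A_hB_h$. The expansion only splits the sum inside the bracket and never exchanges the two factors, so no commutativity is used. Substituting into \eqref{eq:mixed_stage} gives
\[
M_j=\bigl(I-\alpha_jhB_h+\beta_jh^2B_h^2\bigr)+\beta_jh^2B_hA_h=P_j^{(2)}+\beta_jh^2B_hA_h,
\]
where the bracket is exactly \eqref{eq:quadratic_precond}.

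Next, I use the nonsingularity hypothesis. Multiplying on the left by $(P_j^{(2)})^{-1}$ and using linearity,
\[
(P_j^{(2)})^{-1}M_j=(P_j^{(2)})^{-1}P_j^{(2)}+\beta_jh^2(P_j^{(2)})^{-1}B_hA_h=I+\beta_jh^2(P_j^{(2)})^{-1}B_hA_h ,
\]
which is \eqref{eq:preconditioned}. This also justifies the claim that the whole $B_h^2$ contribution is removed: it sits entirely inside $P_j^{(2)}$, and what remains is the single cross term $B_hA_h$. Since $A_h$ and $B_h$ are never reordered anywhere in the argument, the cancellation holds whether or not they commute.

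I would add a remark on the shifted-factor realization, since it is how $(P_j^{(2)})^{-1}$ is applied in practice. For $(\alpha_1,\beta_1)=(1/2,1/8)$ the discriminant is $\alpha_1^2-4\beta_1=-1/4$, and for $(\alpha_2,\beta_2)=(1,1/6)$ it is $1/3$. So $r_{1,\pm}$ form a complex-conjugate pair and $r_{2,\pm}$ are real. In both cases the factorization holds because polynomials in the single matrix $B_h$ commute and multiply like scalar polynomials. This matters only for how the inverse is computed, not for the identity itself, which needs nothing beyond invertibility of $P_j^{(2)}$.

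There is no real obstacle here. The one point that needs care is keeping the cross term in the order $B_hA_h$, as \eqref{eq:mixed_stage} dictates, rather than writing it in the symmetrized or reversed form $A_hB_h$.
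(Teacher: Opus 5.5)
Your proof is correct and matches the paper's argument: you expand $B_h(A_h+B_h)=B_hA_h+B_h^2$ without reordering the factors, identify $M_j=P_j^{(2)}+\beta_jh^2B_hA_h$, and multiply on the left by $(P_j^{(2)})^{-1}$. Your side remark on the shifted factors (discriminants $-1/4$ and $1/3$) is also accurate, but the identity does not need it.
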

\begin{proof}
Expanding the mixed stage operator gives
\[
M_j=I-\alpha_jhB_h+\beta_jh^2B_h^2+\beta_jh^2B_hA_h.
\]
The first three terms are exactly $P_j^{(2)}$, so
\[
M_j=P_j^{(2)}+\beta_jh^2B_hA_h.
\]
Since $P_j^{(2)}$ is nonsingular, left multiplication by its inverse gives \eqref{eq:preconditioned}. No interchange of $A_h$ and $B_h$ is used, so the cancellation of the pure stiff contribution does not require commutativity. The remaining perturbation is exactly the transport-mediated mixed action.
\end{proof}

The identity is structural rather than a general mesh-independent GMRES theorem.  In the common periodic constant-coefficient case, however, a sharper modewise statement is available.

\begin{proposition}[Diffusion-dominated Fourier clustering]
\label{prop:fourier}
Assume \(A_h\) and \(B_h\) are simultaneously diagonalized by the Fourier basis, with mode symbols \(a_k\) and \(b_k\le0\).  For the quadratic preconditioner \eqref{eq:quadratic_precond}, the preconditioned correction on mode \(k\) is
\[
E_{j,k}=\frac{\beta_jh^2b_ka_k}{1-\alpha_jhb_k+\beta_jh^2b_k^2}.
\]
Then
\begin{equation}
|E_{j,k}|\le \frac{|a_k|}{|b_k|}\qquad (b_k\ne0).
\label{eq:fourier_bound}
\end{equation}
For constant-coefficient advection--diffusion, \(a_k=i c\xi_k\), \(b_k=-\nu\xi_k^2\), hence
\[
|E_{j,k}|\le \frac{|c|}{\nu|\xi_k|}.
\]
Therefore diffusion-dominated high-frequency modes cluster toward the identity as \(|\xi_k|\to\infty\), uniformly with respect to the magnitude of \(h|b_k|\).
\end{proposition}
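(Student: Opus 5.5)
The plan is to reduce everything to a scalar estimate on each Fourier mode, using Proposition~\ref{prop:quadratic_cancel}. Simultaneous diagonalization lets us replace \(B_h\), \(A_h\) and \(P_j^{(2)}\) on mode \(k\) by the scalars \(b_k\), \(a_k\) and \(p_k:=1-\alpha_jhb_k+\beta_jh^2b_k^2\). Identity \eqref{eq:preconditioned} then says that the symbol of \((P_j^{(2)})^{-1}M_j\) is \(1+E_{j,k}\), with \(E_{j,k}\) exactly the displayed quotient. This gives the formula for \(E_{j,k}\); what remains is the bound \eqref{eq:fourier_bound}.

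Set \(s=h|b_k|=-hb_k\ge0\); the case \(b_k=0\) is excluded. Then \(p_k=1+\alpha_js+\beta_js^2\), with every coefficient positive. This shows both that \(P_j^{(2)}\) is nonsingular on each mode and that \(p_k\ge\beta_js^2\). Hence
\[
|E_{j,k}|=\frac{\beta_js^2}{1+\alpha_js+\beta_js^2}\cdot\frac{|a_k|}{|b_k|}\le\frac{|a_k|}{|b_k|},
\]
since the first factor is a number in \([0,1)\) for every \(s\ge0\). The bound does not depend on \(s\), which is the claimed uniformity in \(h|b_k|\). The factor \(s\) is left free on purpose: the hidden factor \(s^2/(\dots)\) may tend to \(1\) as \(s\to\infty\), and no smallness in \(s\) is used.

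For constant-coefficient advection--diffusion, substitute \(a_k=ic\xi_k\) and \(b_k=-\nu\xi_k^2\). This gives \(|a_k|/|b_k|=|c|/(\nu|\xi_k|)\), which tends to \(0\) as \(|\xi_k|\to\infty\). Combined with \(\sigma((P_j^{(2)})^{-1}M_j)=\{1+E_{j,k}\}\), this yields clustering of the preconditioned spectrum at \(1\) for high frequencies.

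There is no real obstacle. The only point needing care is the sign hypothesis \(b_k\le0\). It makes every term of \(p_k\) nonnegative, so that \(|p_k|\ge\beta_jh^2b_k^2\) holds without any cancellation. This is the step I would state explicitly. If instead \(b_k\) were allowed complex values, \(p_k\) could have roots, and the argument would fail.
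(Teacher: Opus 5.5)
Your proposal is correct and follows essentially the same route as the paper. You reduce to scalar symbols via Proposition~\ref{prop:quadratic_cancel}, use \(b_k\le0\) to write the denominator as the positive sum \(1+\alpha_jh|b_k|+\beta_jh^2|b_k|^2\ge\beta_jh^2|b_k|^2\), and then substitute the advection--diffusion symbols; your explicit observation that \(p_k\ge1\) also gives the nonsingularity of \(P_j^{(2)}\) required by Proposition~\ref{prop:quadratic_cancel}, a point the paper leaves implicit.
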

\begin{proof}
On Fourier mode $k$, the stage and quadratic preconditioner reduce to scalar multipliers. Proposition~\ref{prop:quadratic_cancel} therefore gives
\[
E_{j,k}=\frac{\beta_jh^2b_ka_k}{1-\alpha_jhb_k+\beta_jh^2b_k^2}.
\]
Because $b_k\le0$, the denominator is the positive real number
\[
1+\alpha_jh|b_k|+\beta_jh^2|b_k|^2,
\]
which is bounded below by $\beta_jh^2|b_k|^2$. Hence, for $b_k\ne0$,
\[
|E_{j,k}|\le\frac{\beta_jh^2|b_k||a_k|}{\beta_jh^2|b_k|^2}=\frac{|a_k|}{|b_k|}.
\]
For constant-coefficient advection--diffusion, $|a_k|$ scales linearly with the wave number while $|b_k|$ scales quadratically. Thus the ratio decays inversely with frequency, and the preconditioned multiplier approaches one for diffusion-dominated high-frequency modes without requiring the stiff number $h|b_k|$ to be small.
\end{proof}

\subsection{Semilinear reaction--diffusion reduction}
Let
\[
F(U)=R(U),\qquad G(U)=LU,
\]
with linear block diffusion \(L\).  Since \(\dot G=L(R(U)+LU)\), either implicit stage can be written
\begin{equation}
P_jU_j+\beta_jh^2LR(U_j)=r_j,
\qquad P_j=I-\alpha_jhL+\beta_jh^2L^2.
\label{eq:semilinear_stage}
\end{equation}
The nonlinear wide Jacobian is avoided by the fixed-point map
\begin{equation}
\mathcal T_j(V)=P_j^{-1}\big(r_j-\beta_jh^2LR(V)\big).
\end{equation}

\begin{proposition}[Reaction--diffusion contraction]
If \(R\) is Lipschitz with constant \(L_R\) on a convex stage neighborhood and
\[
\kappa_j=\beta_jh^2\norm{P_j^{-1}L}L_R<1,
\]
then \(\mathcal T_j\) is a contraction and the Picard iterates converge geometrically.  In particular,
\[
\norm{V^{(m)}-U_j}\le \frac{\kappa_j^m}{1-\kappa_j}\norm{V^{(1)}-V^{(0)}}.
\]
\end{proposition}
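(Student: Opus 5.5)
The plan is to verify the hypotheses of the Banach fixed-point theorem for $\mathcal T_j$ and then read off the standard a priori estimate. First I will check that fixed points of $\mathcal T_j$ are exactly the solutions of the stage equation \eqref{eq:semilinear_stage}. Since $P_j$ is invertible (for $L$ with nonpositive spectrum it is the quadratic polynomial of Proposition~\ref{prop:quadratic_cancel} with $B_h=L$, and invertibility is in any case implicit in the definition of $\mathcal T_j$), $V=\mathcal T_j(V)$ is equivalent to $P_jV=r_j-\beta_jh^2LR(V)$. So the root $U_j$ is a fixed point, and conversely.

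For the contraction bound, take $V,W$ in the convex stage neighborhood $\mathcal N$. The data $r_j$ cancel and
\[
\mathcal T_j(V)-\mathcal T_j(W)=-\beta_jh^2P_j^{-1}L\bigl(R(V)-R(W)\bigr).
\]
Taking norms and using the Lipschitz bound on $R$ gives
\[
\norm{\mathcal T_j(V)-\mathcal T_j(W)}\le\beta_jh^2\norm{P_j^{-1}L}L_R\norm{V-W}=\kappa_j\norm{V-W}.
\]
The point here is that $P_j^{-1}L$ is kept as a single operator rather than split into $\norm{P_j^{-1}}\norm{L}$, since $\norm{L}$ blows up under refinement while $h^2\norm{P_j^{-1}L}$ stays controlled.

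The main obstacle is self-mapping: Banach's theorem needs $\mathcal T_j(\mathcal N)\subset\mathcal N$, and the statement only gives a Lipschitz bound on $\mathcal N$. I would take $\mathcal N$ to be a closed ball $\bar B(U_j,\rho)$, which is convex and complete in the finite-dimensional grid space. Since $\mathcal T_j(U_j)=U_j$, for $V\in\mathcal N$ we get $\norm{\mathcal T_j(V)-U_j}\le\kappa_j\norm{V-U_j}\le\kappa_j\rho<\rho$, so the ball is invariant. If the neighborhood is not presented as a ball about the root, I would shrink it to such a ball and assume $V^{(0)}$ lies in it, which is the usual local-convergence reading of the statement. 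Banach's theorem then gives a unique fixed point in the ball, and it must be $U_j$.

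For the estimate, induction gives $\norm{V^{(k+1)}-V^{(k)}}\le\kappa_j^{k}\norm{V^{(1)}-V^{(0)}}$. Summing the telescoping tail from $k=m$ to $\infty$ gives
\[
\norm{V^{(m)}-U_j}\le\sum_{k\ge m}\kappa_j^{k}\norm{V^{(1)}-V^{(0)}}=\frac{\kappa_j^m}{1-\kappa_j}\norm{V^{(1)}-V^{(0)}},
\]
which is the stated bound and proves geometric convergence.
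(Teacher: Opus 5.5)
Your proof is correct and follows the paper's own argument: the same difference identity for $\mathcal T_j$, the Lipschitz bound giving the factor $\kappa_j$, Banach's fixed-point theorem, and the telescoping tail sum for the a priori estimate. Your invariance check on a closed ball about $U_j$ supplies the self-mapping step that the paper's proof skips; it does so under your local reading that $U_j$ exists in the neighborhood and $V^{(0)}$ starts in the ball.
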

\begin{proof}
For any $V$ and $W$ in the convex stage neighborhood,
\[
\mathcal T_j(V)-\mathcal T_j(W)=-\beta_jh^2P_j^{-1}L\bigl(R(V)-R(W)\bigr).
\]
Taking norms and using the Lipschitz property of $R$ gives
\[
\|\mathcal T_j(V)-\mathcal T_j(W)\|\le\beta_jh^2\|P_j^{-1}L\|L_R\|V-W\|=\kappa_j\|V-W\|.
\]
Thus $\mathcal T_j$ is a contraction when $\kappa_j<1$. The Banach fixed-point theorem gives a unique stage solution in the neighborhood and convergence of the Picard iterates. Moreover,
\[
\|V^{(m)}-U_j\|\le\sum_{\ell=m}^{\infty}\|V^{(\ell+1)}-V^{(\ell)}\|\le\frac{\kappa_j^m}{1-\kappa_j}\|V^{(1)}-V^{(0)}\|,
\]
which proves the stated estimate.
\end{proof}

\subsection{FFT, multilevel, and local-source realizations}
For a periodic constant principal part
\[
B_0=\nu(D_{xx,h}+D_{yy,h})-\kappa_0I,
\]
we use
\[
P_{j,0}^{\mathrm{FFT}}=I-\alpha_jhB_0+\beta_jh^2B_0^2,
\]
which is inverted exactly in Fourier space.  Variable reaction and noncommuting transport remain in the exact matrix-free residual.  Stage-specific LGMRES augmentation vectors are recycled across time steps~\cite{baker2005,parks2006}.  When Krylov wall-clock comparisons are considered, the reference method should be given the same principal-physics preconditioner and recycling strategy.

For nonperiodic problems, a geometric/Galerkin multilevel approximation is applied only to a reference principal operator \(B_0\); the exact stage action still contains the full variable-coefficient \(B_h\) and mixed product \(B_h(A_h+B_h)\).  A representative geometric realization uses bilinear prolongation, full-weighting restriction, Galerkin coarse operators, weighted Jacobi smoothing, and a direct coarse solve, following standard multigrid principles~\cite{trottenberg2001}. This construction is included to show portability of the separation principle; no claim of a production AMG implementation is made.

If \(B_h=\mathrm{diag}(B_1,\dots,B_N)\) is a cell-local stiff source, define
\[
P_j^{\mathrm{blk}}=\mathrm{diag}\big(I-\alpha_jhB_i+\beta_jh^2B_i^2\big)_{i=1}^N.
\]
For a pure source, the preconditioned stage is exactly the identity, independently of the stiffness magnitude.

\subsection{Jin--Xin relaxation and finite Neumann correction}
Consider the Jin--Xin model~\cite{jinxin1995}
\begin{equation}
u_t+v_x=0,\qquad v_t+a^2u_x=\frac{f(u)-v}{\varepsilon},\qquad f(u)=\frac12u^2.
\label{eq:jinxin}
\end{equation}
At an implicit stage, the \(u\) component is fixed by the stage constant.  The \(v\) equation can be written
\[
\big(c_jI-\gamma_j\operatorname{diag}(f'(u))D_h\big)v=\mathrm{rhs},
\]
where
\[
c_j=1+\frac{\alpha_jh}{\varepsilon}+\frac{\beta_jh^2}{\varepsilon^2},
\qquad
\gamma_j=\frac{\beta_jh^2}{\varepsilon}.
\]

\begin{theorem}[Stiff-limit clustering after source elimination]
\label{thm:jinxin}
Preconditioning by the exact local source block \(c_jI\) gives
\begin{equation}
P_j^{-1}M_{v,j}=I-\theta_j(\varepsilon)\operatorname{diag}(f'(u))D_h,
\quad
\theta_j(\varepsilon)=\frac{\beta_jh^2/\varepsilon}{1+\alpha_jh/\varepsilon+\beta_jh^2/\varepsilon^2},
\end{equation}
with
\begin{equation}
0\le \theta_j(\varepsilon)\le \varepsilon.
\end{equation}
Hence, if \(\norm{D_h}\le C_D/\dx\),
\begin{equation}
\norm{P_j^{-1}M_{v,j}-I}
\le C_D\norm{f'(u)}_\infty\frac{\varepsilon}{\dx}.
\label{eq:epsdx}
\end{equation}
The family is uniformly bounded when \(\varepsilon/\dx=O(1)\) and clusters to the identity when \(\varepsilon/\dx\to0\).
\end{theorem}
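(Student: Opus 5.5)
The identity for the preconditioned operator comes straight from the definitions, the bound $0\le\theta_j\le\varepsilon$ from an elementary inequality, and \eqref{eq:epsdx} from a norm estimate.

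\emph{Step 1: the preconditioned operator.} Write the $v$-block stage operator as
\[
M_{v,j}=c_jI-\gamma_j\operatorname{diag}(f'(u))D_h .
\]
This is the form obtained after expanding $I-\alpha_jhB_h+\beta_jh^2B_h(A_h+B_h)$ restricted to the $v$ component, with $B_h$ the local relaxation source $-\varepsilon^{-1}$ acting on $v$. The coefficient $c_j$ collects the pure source polynomial $1+\alpha_jh/\varepsilon+\beta_jh^2/\varepsilon^2$. The mixed term $B_hA_h$ produces the product of $\varepsilon^{-1}$ with the linearized flux derivative, with the sign convention already fixed in the paper. Since $c_j\ge1>0$, $P_j=c_jI$ is invertible and
\[
P_j^{-1}M_{v,j}=I-(\gamma_j/c_j)\operatorname{diag}(f'(u))D_h .
\]
Substituting the definitions of $c_j$ and $\gamma_j$ gives $\theta_j=\gamma_j/c_j$, which is the displayed formula. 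As in Proposition~\ref{prop:quadratic_cancel}, the exact source block cancels algebraically.

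\emph{Step 2: the bound on $\theta_j$.} Multiply numerator and denominator by $\varepsilon^2$:
\[
\theta_j=\frac{\beta_jh^2\varepsilon}{\varepsilon^2+\alpha_jh\varepsilon+\beta_jh^2}.
\]
Nonnegativity holds because $\alpha_j,\beta_j,h,\varepsilon>0$. For the upper bound, drop the nonnegative terms $\varepsilon^2+\alpha_jh\varepsilon$ from the denominator; this gives $\theta_j\le\beta_jh^2\varepsilon/(\beta_jh^2)=\varepsilon$. The estimate is uniform in $h$ and in the stiffness ratio $h/\varepsilon$, exactly as in the Fourier bound of Proposition~\ref{prop:fourier}.

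\emph{Step 3: the norm estimate.} Use submultiplicativity:
\[
\norm{P_j^{-1}M_{v,j}-I}=\theta_j\norm{\operatorname{diag}(f'(u))D_h}\le\varepsilon\,\norm{f'(u)}_\infty\,\frac{C_D}{\dx}.
\]
Here the norm of a diagonal matrix is bounded by the maximum modulus of its entries, which holds in any $\ell^p$-induced norm. This is \eqref{eq:epsdx}. The concluding statements follow at once. The preconditioned family is $I+O(\varepsilon/\dx)$, so it is uniformly bounded when $\varepsilon/\dx=O(1)$. When $\norm{P_j^{-1}M_{v,j}-I}<1$ a Neumann series converges, and the operator tends to $I$ as $\varepsilon/\dx\to0$.

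\emph{Main obstacle.} The only delicate point is Step 1: confirming that the $u$ component is genuinely fixed at the stage, so that the $v$-equation reduces to the stated form with $c_j$ and $\gamma_j$ exactly as given. I would take this reduction as given from the paragraph preceding the theorem. All later steps are elementary.
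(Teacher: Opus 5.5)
Your proof is correct and follows the paper's argument essentially step for step: divide the reduced $v$-stage equation by $c_j>0$, rewrite $\theta_j=\varepsilon\beta_jh^2/(\varepsilon^2+\alpha_jh\varepsilon+\beta_jh^2)$ and bound the denominator below by $\beta_jh^2$, then apply submultiplicativity with $\norm{D_h}\le C_D/\dx$. One small correction to the heuristic in your Step 1: the term $\gamma_j\operatorname{diag}(f'(u))D_h$ does not come from the $-\varepsilon^{-1}$ entry acting on $v$, which produces only $c_j$; it comes from the off-diagonal source derivative $\partial_uG_v=f'(u)/\varepsilon$ composed with the $u$-equation transport $-D_hv$, but since you, like the paper, take the reduced stage form as given, this does not affect the proof.
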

\begin{proof}
Dividing the stage equation by the positive scalar $c_j$ gives
\[
P_j^{-1}M_{v,j}=I-\frac{\gamma_j}{c_j}\operatorname{diag}(f'(u))D_h.
\]
The scalar coefficient is
\[
\theta_j(\varepsilon)=\frac{\gamma_j}{c_j}=\varepsilon\frac{\beta_jh^2}{\varepsilon^2+\alpha_jh\varepsilon+\beta_jh^2}.
\]
For both compact stages, $\alpha_j>0$ and $\beta_j>0$. The denominator is therefore at least $\beta_jh^2$, so the fraction lies in $[0,1]$ and $0\le\theta_j(\varepsilon)\le\varepsilon$. Consequently,
\[
\begin{aligned}
\|P_j^{-1}M_{v,j}-I\|&\le\theta_j(\varepsilon)\|\operatorname{diag}(f'(u))D_h\|\\
&\le\varepsilon\|f'(u)\|_\infty\|D_h\|\\
&\le C_D\|f'(u)\|_\infty\frac{\varepsilon}{\dx},
\end{aligned}
\]
which proves \eqref{eq:epsdx}. If $\varepsilon/\dx$ remains bounded, the perturbation remains bounded independently of the stiffness magnitude; if $\varepsilon/\dx\to0$, it vanishes and the preconditioned stage clusters to the identity.
\end{proof}

\begin{remark}
Theorem~\ref{thm:jinxin} is a stage-solver clustering result.  It does not by itself establish asymptotic preservation or asymptotic accuracy of the fully discrete relaxation solution as \(\varepsilon\to0\).
\end{remark}

After division by $c_j$, write $(I-K_j)v=b_j$, where $\norm{K_j}=O(\varepsilon/\dx)$.

\begin{proposition}[Finite Neumann correction]
\label{prop:neumann}
Assume $\rho_j:=\norm{K_j}<1$ and initialize $v^{[0]}=b_j$. For
\[
v^{[\ell+1]}=b_j+K_jv^{[\ell]},\qquad \ell=0,\ldots,m-1,
\]
one has
\[
v^{[m]}=\sum_{\ell=0}^mK_j^\ell b_j,\qquad
\norm{v-v^{[m]}}\le\frac{\rho_j^{m+1}}{1-\rho_j}\norm{b_j}.
\]
Hence, for fixed $m$, the stage-solve error is $O((\varepsilon/\dx)^{m+1})$ as $\varepsilon/\dx\to0$.
\end{proposition}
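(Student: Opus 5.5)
The plan is to prove the closed form of the iterates by induction on \(m\), and then read off the error bound from the Neumann series of \((I-K_j)^{-1}\) in the induced operator norm. Because \(\rho_j=\norm{K_j}<1\), the series \(\sum_{\ell\ge0}K_j^\ell\) converges absolutely in that norm. Its sum is \((I-K_j)^{-1}\), since the telescoping identity \((I-K_j)\sum_{\ell=0}^{L}K_j^\ell=I-K_j^{L+1}\) holds and \(K_j^{L+1}\to0\). Consequently \(I-K_j\) is invertible, and the exact stage solution is
\[
v=(I-K_j)^{-1}b_j=\sum_{\ell=0}^\infty K_j^\ell b_j .
\]

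\textbf{Closed form of the iterates.} I would prove \(v^{[m]}=\sum_{\ell=0}^mK_j^\ell b_j\) by induction. The case \(m=0\) holds by the initialization \(v^{[0]}=b_j\). If the formula holds for \(m\), then
\[
v^{[m+1]}=b_j+K_j\sum_{\ell=0}^mK_j^\ell b_j=\sum_{\ell=0}^{m+1}K_j^\ell b_j .
\]

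\textbf{Error bound.} Subtracting the partial sum from the full series gives the tail
\[
v-v^{[m]}=\sum_{\ell=m+1}^\infty K_j^\ell b_j .
\]
Submultiplicativity gives \(\norm{K_j^\ell}\le\rho_j^\ell\). Summing the geometric tail then yields
\[
\norm{v-v^{[m]}}\le\sum_{\ell\ge m+1}\rho_j^\ell\norm{b_j}=\frac{\rho_j^{m+1}}{1-\rho_j}\norm{b_j}.
\]
An equivalent check is the identity \(v-v^{[m]}=K_j^{m+1}v\) together with \(\norm{v}\le\norm{b_j}/(1-\rho_j)\).

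\textbf{Asymptotic statement.} From Theorem~\ref{thm:jinxin}, \(K_j=\theta_j(\varepsilon)\operatorname{diag}(f'(u))D_h\), so
\[
\rho_j\le C_D\norm{f'(u)}_\infty\,\varepsilon/\dx .
\]
As \(\varepsilon/\dx\to0\), eventually \(\rho_j\le1/2\), so \(1/(1-\rho_j)\le2\). The error is then bounded by
\[
2\bigl(C_D\norm{f'(u)}_\infty\bigr)^{m+1}(\varepsilon/\dx)^{m+1}\norm{b_j},
\]
which is \(O((\varepsilon/\dx)^{m+1})\) for fixed \(m\).

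The only point needing care is this last step. The implied constant must depend on \(\norm{b_j}\) and \(\norm{f'(u)}_\infty\) being bounded on the stage neighborhood, and I would state that boundedness explicitly as a hypothesis inherited from the stage setting. The rest of the argument is routine.
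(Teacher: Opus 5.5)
Your proposal is correct and follows the paper's proof essentially step for step: induction for the partial Neumann sum, the convergent series representation $v=(I-K_j)^{-1}b_j=\sum_{\ell\ge0}K_j^\ell b_j$, the geometric tail bound, and Theorem~\ref{thm:jinxin} to obtain $\rho_j=O(\varepsilon/\dx)$. Your additions are refinements of points the paper leaves implicit: the telescoping argument for invertibility, and the explicit requirement that $\norm{b_j}$ and $\norm{f'(u)}_\infty$ stay bounded so that the $O$-constant is meaningful.
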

\begin{proof}
The iteration formula gives the partial Neumann sum by induction. Indeed, $v^{[0]}=b_j$, and if $v^{[m-1]}=\sum_{\ell=0}^{m-1}K_j^\ell b_j$, then
\[
v^{[m]}=b_j+K_jv^{[m-1]}=\sum_{\ell=0}^{m}K_j^\ell b_j.
\]
Because $\rho_j<1$, the exact solution has the convergent representation
\[
v=(I-K_j)^{-1}b_j=\sum_{\ell=0}^{\infty}K_j^\ell b_j.
\]
Subtracting the finite sum and applying the geometric-series bound gives
\[
\norm{v-v^{[m]}}\le\sum_{\ell=m+1}^{\infty}\norm{K_j}^\ell\norm{b_j}=\frac{\rho_j^{m+1}}{1-\rho_j}\norm{b_j}.
\]
Theorem~\ref{thm:jinxin} gives $\rho_j=O(\varepsilon/\dx)$ in the source-dominated regime, which yields the stated asymptotic order.
\end{proof}

\section{Numerical experiments}
The experiments are deliberately separated into (i) mechanism checks, (ii) literature-standard PDE benchmarks, and (iii) solver-mechanism checks tied directly to the preconditioning theory. Timings are medians of repeated runs in the same Python environment and are used only for within-experiment comparisons. The reference implementation and numerical data used for the reproducible mechanism and Brusselator studies are publicly archived in the Zenodo repository identified in the Data Availability statement. The goal is not to claim a universal speedup.

\subsection{Mechanism checks and the inexact-stage budget}
For smooth one- and two-dimensional Burgers--reaction problems, the implemented WENO-Z5/CK derivatives approach fifth-order spatial consistency, safely above the \(q\ge3\) threshold of Corollary~\ref{cor:full_discrete}.  The ADER2 local predictor approaches order two while the inherited outer composition remains fourth order.  In noncommuting linear and nonlinear WENO ablations, full differentiation is fourth order whereas self differentiation is first order, in agreement with Proposition~\ref{prop:commutator}.

\begin{figure}[!htbp]
\centering
\includegraphics[width=0.95\textwidth]{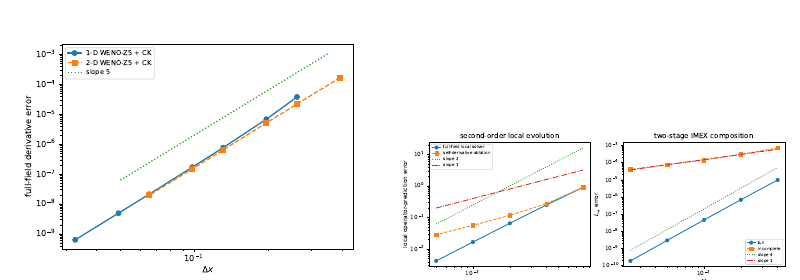}
\caption{Direct CK derivative consistency and the ADER2/local-composition mechanism.  The local trajectory is order two, while the inherited outer method is fourth order when the full-field derivatives are supplied.}
\label{fig:ader2}
\end{figure}

\begin{figure}[!htbp]
\centering
\includegraphics[width=0.90\textwidth]{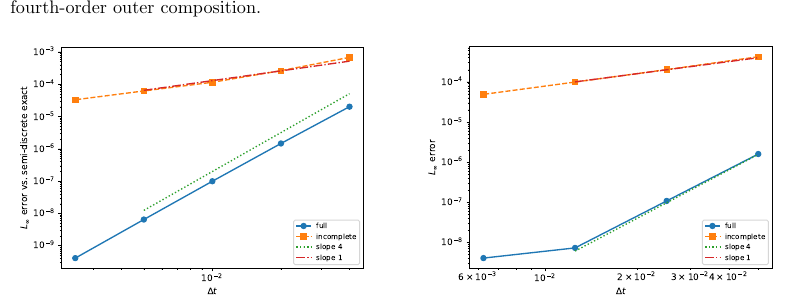}
\caption{Mixed-compatibility ablations.  Full-field differentiation is fourth order, while self differentiation is approximately first order for a noncommuting linear split and a nonlinear WENO realization.}
\label{fig:commutator}
\end{figure}

To verify Proposition~\ref{prop:inexact} independently of nonlinear solver idiosyncrasies, we inject residuals of controlled size into both linear stage equations.  With \(\norm{r_*}=O(h^3)\) and \(\norm{r_1}=O(h^5)\), fourth-order convergence is retained; making both residuals one order looser gives approximately third-order convergence.  This experiment is produced directly by the revised verification code using controlled residual injection, so its observed orders are independent of the nonlinear solver implementation.

\begin{figure}[!htbp]
\centering
\includegraphics[width=0.58\textwidth]{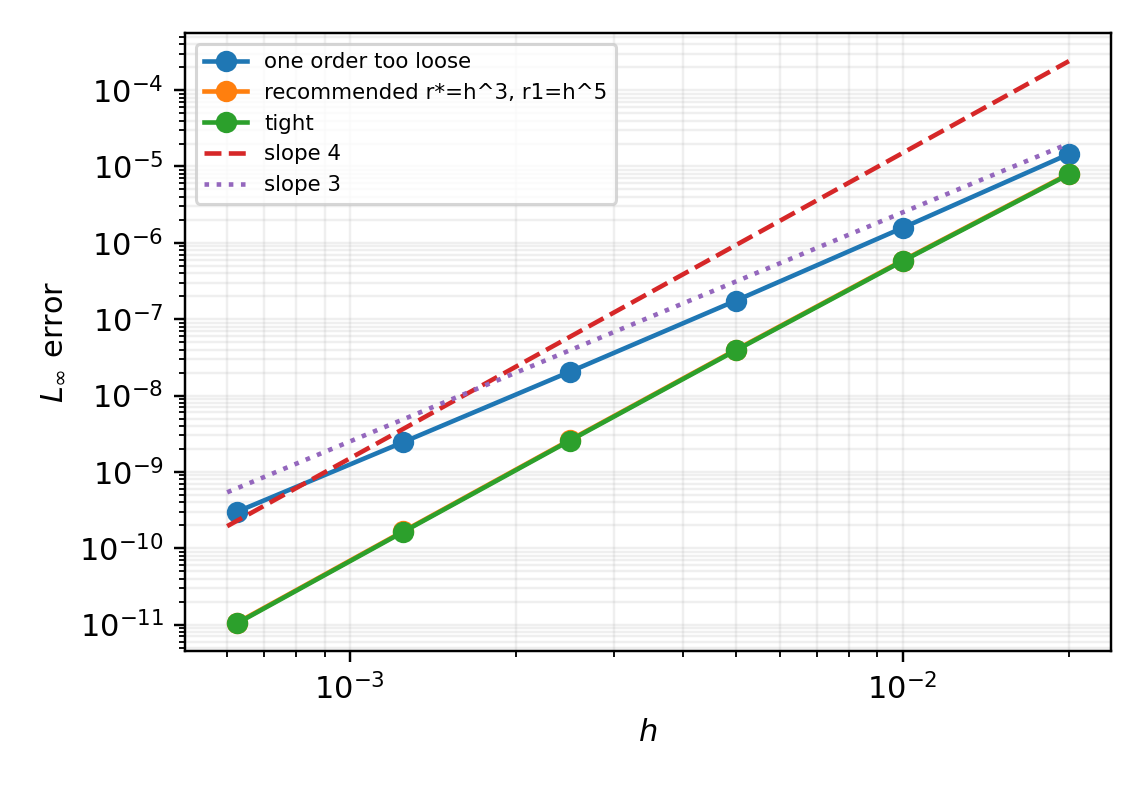}
\caption{Controlled inexact-stage experiment.  The recommended residual scales \(r_*=O(h^3)\), \(r_1=O(h^5)\) preserve fourth-order convergence, while the one-order-looser pair produces third-order behavior.}
\label{fig:tolerance}
\end{figure}

\subsection{Classical stiff front: LeVeque--Yee}
We use the classical scalar balance law~\cite{leveque1990,wang2012}
\begin{equation}
u_t+u_x=-\mu u\left(u-\frac12\right)(u-1),
\qquad
u(x,0)=\begin{cases}1,&x\le0.3,\\0,&x>0.3.\end{cases}
\label{eq:leveque}
\end{equation}
The exact discontinuity translates at unit speed.  With \(N=150\), CFL \(=0.4\), and \(T=0.3\), both methods locate the front well at \(\mu=100\), while both exhibit the classical wrong-speed pathology at \(\mu=1000\).  The measured \(u=1/2\) crossings are shown in Table~\ref{tab:leveque}.  This negative result is important: mixed-compatible time differentiation does not replace shock-aware source reconstruction.

\begin{table}[!htbp]
\centering
\caption{LeVeque--Yee front location.  Error is measured in grid cells relative to the exact crossing \(x=0.6\).}
\label{tab:leveque}
\begin{tabular}{cccc}
\toprule
\(\mu\) & method & front location & error/cell\\
\midrule
100 & Compact-ADER2 & 0.59818 & -0.27\\
100 & KC-ARK4 & 0.59969 & -0.05\\
1000 & Compact-ADER2 & 0.56200 & -5.70\\
1000 & KC-ARK4 & 0.57093 & -4.36\\
\bottomrule
\end{tabular}
\end{table}

\begin{figure}[!htbp]
\centering
\includegraphics[width=0.76\textwidth]{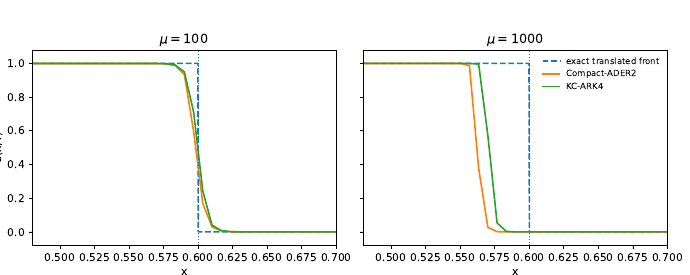}
\caption{LeVeque--Yee stiff-front benchmark at \(T=0.3\).  Moderate stiffness is well located; the extreme case exposes the underresolved shock--source propagation error.}
\label{fig:leveque}
\end{figure}

\subsection{Two-dimensional Brusselator: matched work, Pareto behavior, and a larger grid}
We use the classical reaction--diffusion system in the benchmark configuration of~\cite{asante2016}:
\begin{align}
u_t&=D_u\Delta u+A+u^2v-(B+1)u,\\
v_t&=D_v\Delta v+Bu-u^2v,
\end{align}
with homogeneous Neumann boundary conditions on $[0,1]^2$, $A=1$, $B=3.4$, $D_u=D_v=2\times10^{-3}$, and initial data $u=1/2+y$, $v=1+5x$. Reaction is explicit and diffusion is implicit. For this benchmark the reproducible reference implementation uses centered second differences with ghost reflection for the Neumann Laplacian. Because all temporal comparisons use the same spatial grid and a temporally refined KC-ARK4 reference on that grid, the reported errors measure the time-integration and stage-solve effects rather than spatial convergence.

The compact stages use the polynomially preconditioned fixed-point reduction with two applications of the polynomial inverse per stage: the first application initializes the fixed-point iterate and the second performs one Picard correction. Each polynomial inverse application counts as one solve unit. The DCT realization diagonalizes the Neumann Laplacian and applies the quadratic inverse mode by mode, so neither $L^2$ nor a widened sparse matrix is assembled in this experiment. KC-ARK4 uses five diffusion solves per time step. All timings below are medians of repeated runs in the same Python process and are intended only for within-experiment comparison.

At the matched budget of 200 solve units, Table~\ref{tab:brusselator} shows results on both $41^2$ and $81^2$ grids. On $41^2$, Compact-ADER2 reduces the $L^\infty$ temporal/stage error by a factor of about $3.43$ and the RMS error by a factor of about $2.89$ relative to KC-ARK4, while the median wall time is about $14\%$ lower. On $81^2$, the corresponding factors are about $3.37$ and $2.98$, with the compact wall time about $13\%$ lower. The coarse/fine reference differences are $1.31\times10^{-8}$ and $1.64\times10^{-6}$, respectively, well below the errors reported in the table.

\begin{table}[!htbp]
\centering
\caption{Two-dimensional Brusselator at $T=5$ with a matched budget of 200 polynomial/diffusion solve units. Wall time is the median measured in the reproducible reference implementation.}
\label{tab:brusselator}
\begin{tabular}{c l c c c c c}
\toprule
grid & method & $\dt$ & steps & time (s) & $L^\infty$ error & RMS error\\
\midrule
$41^2$ & Compact-ADER2 & 0.100 & 50 & 0.152 & $2.40\times10^{-3}$ & $2.77\times10^{-4}$\\
$41^2$ & KC-ARK4       & 0.125 & 40 & 0.177 & $8.22\times10^{-3}$ & $8.00\times10^{-4}$\\
$81^2$ & Compact-ADER2 & 0.100 & 50 & 0.351 & $2.83\times10^{-3}$ & $2.93\times10^{-4}$\\
$81^2$ & KC-ARK4       & 0.125 & 40 & 0.402 & $9.53\times10^{-3}$ & $8.73\times10^{-4}$\\
\bottomrule
\end{tabular}
\end{table}

A single matched-work point can hide a different conclusion at other tolerances, so we additionally sweep the step count and plot error against measured wall time. Figure~\ref{fig:brusselator_pareto} shows that the compact method remains on the favorable side of the wall-time/error tradeoff over a substantial part of the tested range on both grids, although the curves approach one another at the smallest errors. This is stronger evidence than a solve-count comparison alone, but it still supports a regime-dependent rather than universal efficiency claim.

\begin{figure}[!htbp]
\centering
\begin{subfigure}[t]{0.48\textwidth}
\centering
\includegraphics[width=\textwidth]{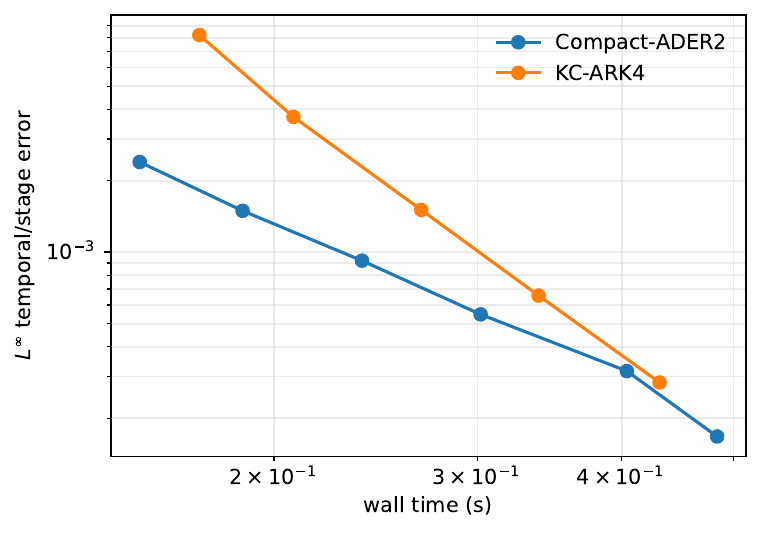}
\caption{$41^2$ grid.}
\end{subfigure}\hfill
\begin{subfigure}[t]{0.48\textwidth}
\centering
\includegraphics[width=\textwidth]{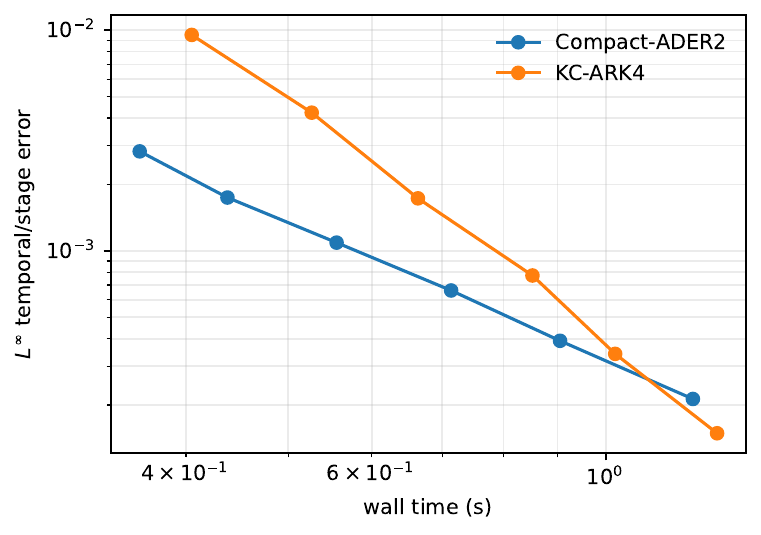}
\caption{$81^2$ grid.}
\end{subfigure}
\caption{Brusselator $L^\infty$ temporal/stage error versus measured wall time. Only stable step counts are plotted. The two-grid sweep tests whether the work-normalized advantage survives a direct time-to-accuracy comparison.}
\label{fig:brusselator_pareto}
\end{figure}

\begin{figure}[!htbp]
\centering
\includegraphics[width=0.90\textwidth]{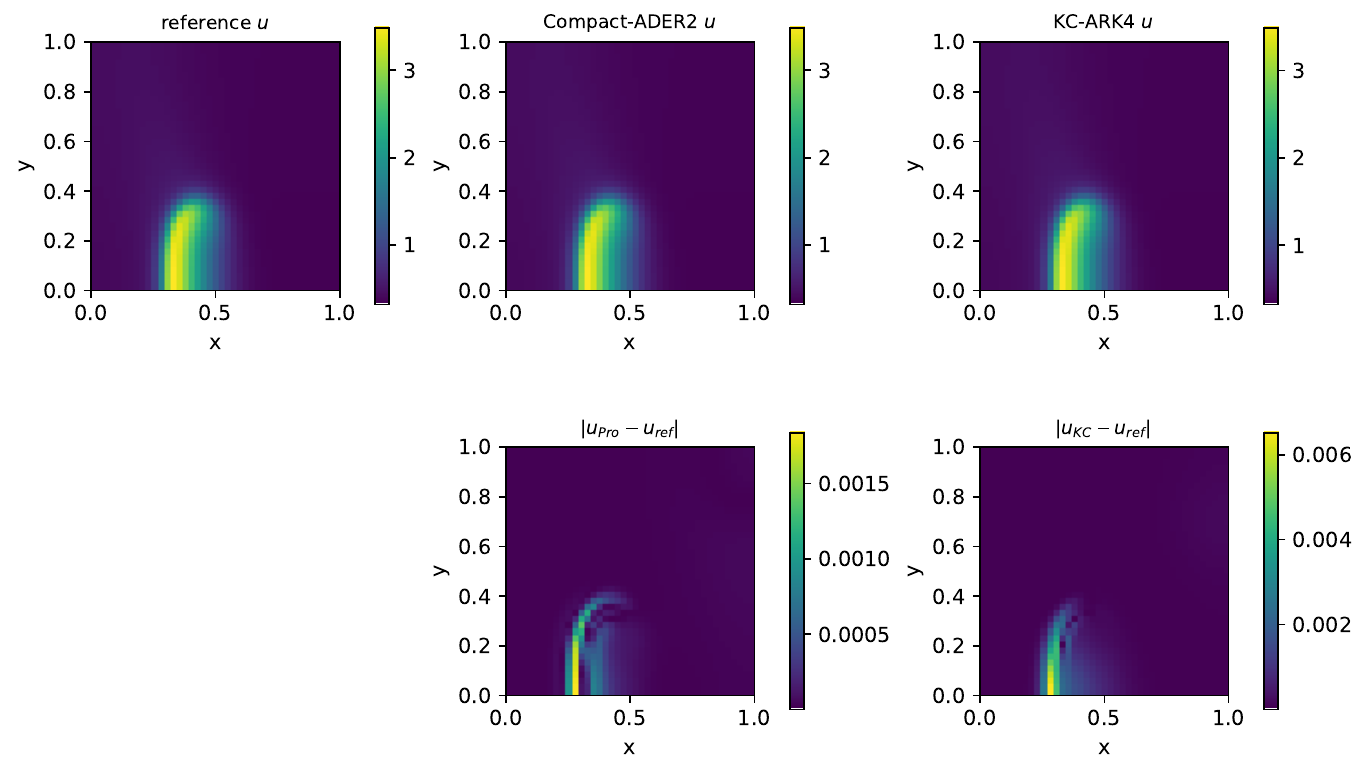}
\caption{Brusselator $u$ field on the $41^2$ grid at $T=5$: temporally refined reference, matched-work Compact-ADER2 and KC-ARK4 solutions, and their pointwise errors.}
\label{fig:brusselator}
\end{figure}

\subsection{Reproducible solver-mechanism checks}
The remaining experiments target the two solver estimates that are most specific to the PDE realization.  First, for constant-coefficient advection--diffusion we evaluate the scalar correction in Proposition~\ref{prop:fourier} over Fourier modes.  Figure~\ref{fig:solver_mechanisms}(a) shows that both compact stages remain below the modewise bound $|c|/(\nu |\xi_k|)$ and that the correction decays with frequency.  This verifies that the quadratic inverse removes the dominant diffusive stiffness while leaving only a transport-mediated perturbation.

Second, we test the finite Neumann approximation after exact source elimination in the Jin--Xin stage.  A periodic centered derivative is used on a fixed grid, $h=0.2\Delta x$, and the ratio $\varepsilon/\Delta x$ is decreased.  Figure~\ref{fig:solver_mechanisms}(b) reports the relative stage error after zero, one, and two Neumann corrections.  Least-squares slopes over the six smallest ratios are approximately $1.0$, $2.0$, and $3.0$, respectively, in agreement with Proposition~\ref{prop:neumann}.  These tests isolate the solver mechanisms without conflating them with the temporal error constants of a full relaxation calculation.

\begin{figure}[!htbp]
\centering
\begin{subfigure}[t]{0.48\textwidth}
\centering
\includegraphics[width=\textwidth]{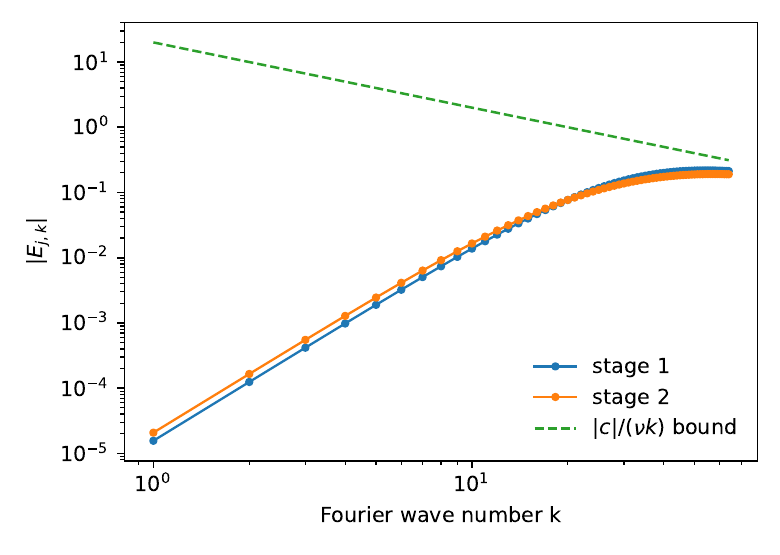}
\caption{Quadratic-preconditioned Fourier correction.}
\end{subfigure}\hfill
\begin{subfigure}[t]{0.48\textwidth}
\centering
\includegraphics[width=\textwidth]{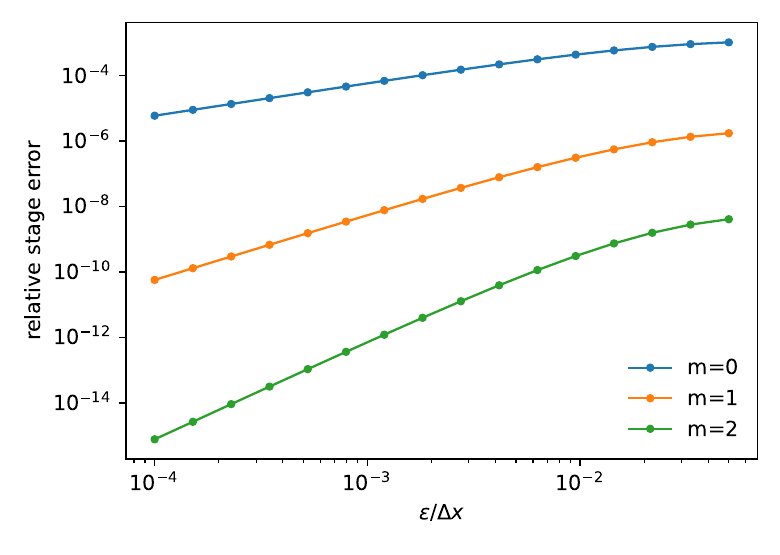}
\caption{Finite Neumann stage error for Jin--Xin relaxation.}
\end{subfigure}
\caption{Reproducible checks of the two main solver estimates.  The left panel verifies the diffusion-dominated Fourier bound; the right panel verifies the predicted order of the finite Neumann correction as $\varepsilon/\Delta x\to0$.}
\label{fig:solver_mechanisms}
\end{figure}

The multilevel and local-source constructions in Section~7 remain useful realizations of the same exact-residual/approximate-inverse principle, but no general mesh-independent GMRES theorem is claimed for variable-coefficient nonnormal operators.  The practical evidence in this paper is therefore concentrated on the reproducible Brusselator time-to-accuracy study and on mechanism tests that directly match the proved estimates.

\section{Discussion}
The paper is organized around one consistency--cost chain rather than around the individual solver technologies.  The full-field cross interaction is required by the PDE realization, that same interaction widens the implicit stage, and the solver design is successful only if it reduces the algebraic cost without deleting the term responsible for consistency.  Four consequences of this chain should not be conflated.

First, mixed compatibility as a property of the compact time discretization belongs to the companion study.  The new PDE-level result here is the realization mechanism: full-field CK derivatives preserve the inherited construction, while self differentiation produces the explicit Lie-bracket defect \eqref{eq:comm_defect}.  This distinction also makes clear why ``ADER for stiff PDEs'' is not itself the novelty.

Second, the same mixed term that is needed for accuracy produces the main implicit-solver difficulty.  The solver hierarchy is therefore not a collection of unrelated preconditioners.  Each method is an implementation of one separation principle: keep \(B_h(A_h+B_h)\) in the exact residual while solving only a dominant stiff approximation.  Proposition~\ref{prop:quadratic_cancel} gives the algebraic cancellation, Proposition~\ref{prop:fourier} gives a diffusion-dominated modewise clustering result, the semilinear contraction handles reaction--diffusion, and Theorem~\ref{thm:jinxin} explains why source-dominated relaxation becomes easier after exact local elimination.

Third, an implicit solve count is not a wall-clock metric.  The recomputed Brusselator study therefore reports both matched-work results and direct error-versus-wall-time sweeps on two grids.  In that semilinear setting the compact method retains a favorable time-to-accuracy tradeoff over a useful range.  The Fourier and relaxation experiments separately verify the mechanisms that make the widened stages inexpensive in diffusion- and source-dominated regimes.  The appropriate claim is therefore regime-dependent efficiency rather than universal speedup or global Pareto dominance.

Fourth, time integration cannot fix a spatial shock--source inconsistency that it does not represent.  The LeVeque--Yee result is retained specifically because it prevents an overly broad interpretation of the smooth and finite-amplitude WENO tests.  Shock-aware source reconstruction remains a separate spatial ingredient~\cite{wang2012}.

\section{Conclusions}
Starting from the compact two-stage fourth-order IMEX time discretization of the companion time-discretization manuscript, we have developed a PDE realization and solver architecture whose contributions are distinct from the time-integrator construction itself.  An order-two CK/ADER local evolution is enough to supply the inherited fourth-order composition, provided the derivatives follow the complete semi-discrete field.  If the split components are differentiated separately, the leading error is a Lie-bracket/commutator defect and noncommuting problems generically fall to first order.

At the fully discrete level, the reconstructed derivative order enters through \(h\dx^q\), so only \(q\ge3\) is required when \(h=O(\dx)\).  Inexact implicit stages may also be solved economically: midpoint and endpoint residuals of sizes \(O(h^3)\) and \(O(h^5)\) preserve fourth-order convergence.  This tolerance asymmetry has been added to the revised reference implementation and verified by controlled residual injection.

The same cross interaction that preserves temporal consistency widens the implicit stages.  Matrix-free application keeps that interaction exact, while quadratic/shifted, FFT, multilevel, semilinear, and source-local solvers approximate only the dominant stiff inverse.  The quadratic preconditioner removes the pure \(B_h^2\) term exactly and clusters diffusion-dominated Fourier modes; source elimination in Jin--Xin relaxation leaves an \(O(\varepsilon/\dx)\) transport correction with a controlled finite Neumann approximation.

The numerical evidence supports a precise rather than universal conclusion.  In the recomputed Brusselator study, matched-work accuracy gains persist on both $41^2$ and $81^2$ grids and are accompanied by a favorable wall-time/error tradeoff over the tested range.  Separate Fourier and relaxation experiments confirm the clustering and finite-Neumann mechanisms predicted by the solver analysis.  The method does not remove the classical underresolved shock--source propagation error, and no general mesh-independent Krylov claim is made for heterogeneous nonnormal problems.  These limitations identify shock-aware reconstruction, mixed-aware operator optimization, and broader reference-method comparisons as natural next steps.

\section*{Thanks}
The author gratefully acknowledges the financial support of the Key Program of Henan Higher Education Institutions (Grant No. 26A110007), the Young Talents Fund of Henan Province (Grant No. 252300423500), and the Doctoral Startup Foundation of Henan Polytechnic University (Grant No. B2024-60).

\section*{Data Availability}
The numerical data and source code supporting this study are publicly available in Zenodo at \href{https://doi.org/10.5281/zenodo.21926498}{doi:10.5281/zenodo.21926498}.

\end{document}